\documentclass[11pt,reqno]{amsart}

\usepackage[margin=1in]{geometry}

\usepackage{amssymb, mathrsfs, amscd}
\usepackage{extpfeil}

\usepackage{graphicx}
\usepackage{colortbl}
\usepackage{float}
\usepackage{caption}
\usepackage{subcaption} 
\graphicspath{{Figure/}}
\usepackage[dvipsnames]{xcolor}

\usepackage{enumerate}
\usepackage{url}
\usepackage{hyperref}
\usepackage{doi}

\theoremstyle{plain}
\newtheorem{theorem}{Theorem}[section]
\newtheorem{proposition}[theorem]{Proposition}
\newtheorem{lemma}[theorem]{Lemma}
\newtheorem{corollary}[theorem]{Corollary}

\theoremstyle{definition}
\newtheorem{definition}[theorem]{Definition}

\theoremstyle{remark}
\newtheorem{remark}[theorem]{Remark}

\numberwithin{equation}{section}

\makeatletter
\@namedef{subjclassname@2020}{\textup{2020} Mathematics Subject Classification}
\makeatother

\title[viscous-dispersive shocks for the NSFK system]{Existence and uniqueness of viscous-dispersive shock profiles for the Navier--Stokes--Fourier--Korteweg system}

\subjclass[2020]{35Q35, 76N06, 35C07}
\keywords{Navier--Stokes--Fourier--Korteweg system, traveling wave, viscous-dispersive shock, geometric singular perturbation theory, heteroclinic orbit}

\author[S. Han]{Sungho Han}
\address[Sungho Han]{Department of Mathematical Sciences, \newline
Korea Advanced Institute of Science and Technology (KAIST), \newline
291 Daehak-ro, Yuseong-gu, Daejeon 34141, Republic of Korea}
\email{sungho\_han@kaist.ac.kr}

\begin{document}

\bibliographystyle{acm}

\begin{abstract}
We establish the existence and uniqueness, up to translation, of monotone viscous-dispersive shock profiles for the one-dimensional Navier--Stokes--Fourier--Korteweg system, provided the two end states satisfy the Rankine--Hugoniot and Lax entropy conditions and the shock strength is small. Taking the shock strength as a singular perturbation parameter, we reduce the profile equations by Fenichel's geometric singular perturbation theory to a scalar equation on a slow manifold and obtain the profile as a heteroclinic orbit. We also give pointwise estimates of the profiles and their derivatives up to fourth order, with explicit leading-order terms for the first derivatives.
\end{abstract}

\maketitle

\section{Introduction}\label{sec:intro}
\setcounter{equation}{0}

We consider the one-dimensional Navier--Stokes--Fourier--Korteweg (NSFK) system in Lagrangian mass coordinates $(t,x)\in\mathbb R_+\times\mathbb R$, where $\mathbb R_+:=(0,\infty)$:
\begin{equation}\label{eq:NSFK}
\begin{cases}
    \displaystyle v_t - u_x = 0, \\
    \displaystyle u_t + p(v,\theta)_x = \mu\left(\frac{u_x}{v}\right)_x
        + \kappa\left(-\frac{v_{xx}}{v^5} + \frac{5v_x^2}{2v^6}\right)_x, \\
    \displaystyle E_t + (p(v,\theta)u)_x = \alpha\left(\frac{\theta_x}{v}\right)_x + \mu\left(\frac{uu_x}{v}\right)_x + \kappa\left[\left(-\frac{v_{xx}}{v^5} + \frac{5v_x^2}{2v^6}\right)u + \frac{v_x u_x}{v^5}\right]_x.
\end{cases}
\end{equation}
Here, $v=v(t,x)$, $u=u(t,x)$, and $\theta=\theta(t,x)$ denote the specific volume, velocity, and absolute temperature, respectively, and $E=E(t,x)$ denotes the specific total energy. The constants $\mu,\alpha,\kappa>0$ are the viscosity, heat-conductivity, and capillarity coefficients, respectively.

For an ideal polytropic gas, the pressure is given by
\begin{equation}\label{eq:state_laws}
    p(v,\theta)=\frac{R\theta}{v},
\end{equation}
where $R>0$ is the gas constant. The specific total energy is
\begin{equation}\label{eq:total_energy}
    E = e+\frac{u^2}{2} = \frac{R\theta}{\gamma-1} +\frac{\kappa v_x^2}{2v^5} +\frac{u^2}{2},
\end{equation}
where $\gamma>1$ is the adiabatic exponent and $e$ is the internal energy, which includes the capillary energy $\kappa v_x^2/(2v^5)$.

The NSFK system describes compressible flows with internal capillarity, in which the Korteweg stress depends on the density and its spatial derivatives. The diffuse-interface description of capillary fluids goes back to van der Waals \cite{W94} and Korteweg \cite{K01}. Dunn and Serrin \cite{DS85} introduced an interstitial work flux into the energy
balance, thereby obtaining a formulation consistent with the second law of thermodynamics.

From \eqref{eq:NSFK} and \eqref{eq:total_energy}, we obtain 
\begin{equation*}
\begin{cases}
    \displaystyle v_t-u_x=0,\\
    \displaystyle u_t+p(v,\theta)_x =\mu\left(\frac{u_x}{v}\right)_x+\kappa\left(-\frac{v_{xx}}{v^5}+\frac{5v_x^2}{2v^6}\right)_x,\\
    \displaystyle \frac{R}{\gamma-1}\theta_t+p(v,\theta) u_x=\alpha\left(\frac{\theta_x}{v}\right)_x+\frac{\mu u_x^2}{v}.
\end{cases}
\end{equation*}

\subsection{Literature Review and Motivation}
\label{subsec:motivation}

For the barotropic Navier--Stokes--Korteweg (NSK) system, the existence theory is developed in \cite{DD01,H11,CH13,GL16,AS22}. Nonlinear stability is known for rarefaction waves \cite{C12}, viscous-dispersive shock profiles \cite{CHZ15,HKKL25}, and composite waves of a rarefaction wave and a viscous-dispersive shock profile \cite{HK26}. Spectral stability of viscous-dispersive shock profiles in Korteweg-type and quantum-hydrodynamic models is investigated in \cite{FLP26,FPZ23}. For the full NSFK system, the existence theory is developed in \cite{HL96,H09,CHZ17,HPZ18}, and stability is known for a single viscous contact wave \cite{CX13} and for a composite wave of two rarefaction waves and a viscous contact wave \cite{CS19}.

A viscous-dispersive shock profile may be monotone or have oscillatory tails. For the scalar Korteweg--de Vries--Burgers (KdVB) equation, Bona and Schonbek \cite{BS85} characterized the transition between the two behaviors. The stability of monotone KdVB profiles is studied in \cite{Pego85,CEKS26-L2}, and the structure and stability of oscillatory KdVB profiles in \cite{CEKS26-Osc}. For the modified KdVB equation, whose flux is non-convex, quantitative properties of monotone degenerate shock profiles and the stability of their composite waves with a rarefaction wave are obtained in \cite{EHK26}.

At the system level, Smoller and Shapiro \cite{SS82} obtained early existence results under constant viscosity and dispersion matrices, and Pego \cite{P84} constructed weak shock profiles of viscous systems with singular viscosity matrices, including the Navier--Stokes--Fourier system, by a center manifold reduction. When only one of heat conduction and capillarity is present, the profile equations form a planar system. For the Navier--Stokes--Fourier system, Gilbarg \cite{G51} proved the existence and uniqueness of profiles for arbitrary end states satisfying the shock conditions. Hagan and Slemrod \cite{HS83} obtained profiles of Lax shocks of arbitrary amplitude for the isothermal system with capillarity. Folino, Lattanzio, and Plaza \cite{FLP26} established the existence and uniqueness up to translation of profiles of arbitrary amplitude for the barotropic NSK system and obtained the monotonicity and derivative estimates of weak profiles by a slow manifold reduction.

With both heat conduction and capillarity, the profile equations form a three-dimensional system. For Lax shocks of an ideal gas, the equilibria of the profile equations are classified and connecting orbits are computed numerically in \cite{ThH14}. In this paper, we prove the existence of monotone profiles of weak Lax shocks, their uniqueness in the monotone class, and pointwise estimates. We obtain these results by Fenichel's geometric singular perturbation theory \cite{F79,J95}, which gives weak shock profiles in \cite{HKKL25,EEKO25,EE26}.

The method of $a$-contraction with shifts was introduced by Kang and Vasseur \cite{KV17,KV21-JEMS} and extended to composite waves by Kang, Vasseur, and Wang \cite{KVW23}. It has been applied to viscous-dispersive shocks of the NSK system \cite{HKKL25,HK26,HKO26,EKK26} and of KdVB-type equations \cite{CEKS26-L2,CEKS26-Osc,EHK26}. The companion work \cite{HKK26} uses this method, together with the monotonicity and the pointwise estimates of Theorem~\ref{thm:main}, to prove the time-asymptotic stability of a composite wave of a rarefaction wave, a viscous contact wave, and a viscous-dispersive shock profile for the NSFK system, the analogue of the result of Kang, Vasseur, and Wang \cite{KVW25} for the Navier--Stokes--Fourier system. Since the left end state of the shock there ranges over a neighborhood of a given state, the constants in Theorem~\ref{thm:main} are needed uniformly for left end states in a compact set.

\subsection{Viscous-Dispersive Shock Profiles}
\label{subsec:traveling_waves}

We consider viscous-dispersive shock profiles of the NSFK system, defined as traveling wave solutions $(\tilde v,\tilde u,\tilde\theta)(\xi)$ with $\xi=x-\sigma t$ connecting two distinct constant end states $(v_\pm,u_\pm,\theta_\pm)$ as $\xi\to\pm\infty$. 

Since the end states are constant, their capillary contribution to the total energy vanishes. We therefore define
\begin{equation*}
    p_\pm := p(v_\pm,\theta_\pm) = \frac{R\theta_\pm}{v_\pm}, \qquad E_\pm := \frac{R}{\gamma-1}\theta_\pm +\frac{u_\pm^2}{2}.
\end{equation*}
The end states and the shock speed satisfy the Rankine--Hugoniot condition:
\begin{equation}\label{eq:RH}
\begin{cases}
    -\sigma(v_+-v_-)-(u_+-u_-)=0,\\
    -\sigma(u_+-u_-)+(p_+-p_-)=0,\\
    -\sigma(E_+-E_-)+(p_+u_+-p_-u_-)=0.
\end{cases}
\end{equation}
The characteristic speeds of the underlying Euler system in Lagrangian mass coordinates are
\begin{equation*}
    \lambda_1=-\sqrt{\frac{\gamma p(v,\theta)}{v}},\qquad
    \lambda_2=0,\qquad
    \lambda_3=\sqrt{\frac{\gamma p(v,\theta)}{v}}.
\end{equation*}
Thus, the Lax shock conditions for the 1- and 3-shock families are
\begin{equation}\label{eq:Lax}
    -\sqrt{\frac{\gamma p_+}{v_+}}<\sigma<-\sqrt{\frac{\gamma p_-}{v_-}}
    \quad\text{for a 1-shock},
    \qquad
    \sqrt{\frac{\gamma p_+}{v_+}}<\sigma<\sqrt{\frac{\gamma p_-}{v_-}}
    \quad\text{for a 3-shock}.
\end{equation}
It follows from the Rankine--Hugoniot condition \eqref{eq:RH} and the Lax shock conditions \eqref{eq:Lax} that
\begin{equation*}
\begin{aligned}
    &\text{1-shock:}\qquad \sigma<0,\quad v_+<v_-,\quad u_+<u_-,\quad \theta_+>\theta_-,\\
    &\text{3-shock:}\qquad \sigma>0,\quad v_+>v_-,\quad u_+<u_-,\quad \theta_+<\theta_-.
\end{aligned}
\end{equation*}

For traveling waves, the NSFK system \eqref{eq:NSFK} becomes the system of ordinary differential equations
\begin{equation}\label{eq:shock-ode}
\begin{cases}
    \displaystyle -\sigma\tilde v'-\tilde u'=0,\\
    \displaystyle -\sigma\tilde u'+\tilde p' =
    \left(\mu\frac{\tilde u'}{\tilde v}\right)' +
    \kappa\left( -\frac{\tilde v''}{\tilde v^5} +\frac{5(\tilde v')^2}{2\tilde v^6} \right)',\\
    \displaystyle
    -\sigma\tilde E' +(\tilde p\tilde u)'
    = \left(\alpha\frac{\tilde\theta'}{\tilde v}\right)' + \left(\mu\frac{\tilde u\tilde u'}{\tilde v}\right)' + \kappa\left[ \left(-\frac{\tilde v''}{\tilde v^5} +\frac{5(\tilde v')^2}{2\tilde v^6} \right)\tilde u +\frac{\tilde v'\tilde u'}{\tilde v^5} \right]',
\end{cases}
\end{equation}
where $\tilde{p} := p(\tilde{v}, \tilde{\theta})$ and $\tilde{E} := \frac{R}{\gamma-1}\tilde{\theta}  + \frac{\kappa}{2}\frac{(\tilde{v}')^2}{\tilde{v}^5} + \frac{\tilde{u}^2}{2}$. The profile is subject to the far-field conditions
\begin{equation}\label{eq:shock-ode-bc}
    \lim_{\xi\to\pm\infty} (\tilde v,\tilde u,\tilde\theta)(\xi)= (v_\pm,u_\pm,\theta_\pm).
\end{equation}
A solution of \eqref{eq:shock-ode}--\eqref{eq:shock-ode-bc} is called a monotone profile if each of $\tilde v$, $\tilde u$, and $\tilde\theta$ is monotone on $\mathbb R$.

\subsection{Main Results} \label{subsec:main_results}
The following theorem establishes the existence and uniqueness up to translation of small-amplitude monotone viscous-dispersive shock profiles, together with pointwise estimates for the normalized profile.

\begin{theorem} \label{thm:main}
    For a given left end state $(v_-,u_-,\theta_-)\in\mathbb R_+\times\mathbb R\times\mathbb R_+$, there exist positive constants $\varepsilon_0$, $c$, and $C$ such that the following holds for every right end state $(v_+,u_+,\theta_+)\in\mathbb R_+\times\mathbb R\times\mathbb R_+$ and every shock speed $\sigma\in\mathbb R$ satisfying the Rankine--Hugoniot condition \eqref{eq:RH}, the Lax 3-shock condition in \eqref{eq:Lax}, and $\varepsilon:=|v_+-v_-|<\varepsilon_0$.
    There is a traveling wave solution $(\tilde{v}, \tilde{u}, \tilde{\theta}) : \mathbb{R} \to \mathbb R_+ \times \mathbb{R} \times \mathbb R_+$ to \eqref{eq:shock-ode}--\eqref{eq:shock-ode-bc}, which is a monotone profile and is unique up to translation within the monotone class. Let $(\tilde{v}, \tilde{u}, \tilde{\theta})$ be normalized by $\tilde{v}(0) = \frac{v_- + v_+}{2}$.
    Then
    \begin{equation}\label{eq:shock_estimates}
    \begin{aligned}
        &\tilde{v}_\xi > 0, \quad \tilde{u}_\xi < 0, \quad \tilde{\theta}_\xi < 0, \quad &&\xi \in \mathbb{R},\\
        &|(\tilde{v}(\xi) - v_-, \tilde{u}(\xi) - u_-, \tilde{\theta}(\xi) - \theta_-)|
            \le C\varepsilon\, e^{-c\varepsilon|\xi|}, \quad &&\xi \le 0,\\
        &|(\tilde{v}(\xi) - v_+, \tilde{u}(\xi) - u_+, \tilde{\theta}(\xi) - \theta_+)|
            \le C\varepsilon\, e^{-c\varepsilon|\xi|}, \quad &&\xi \ge 0,\\
        &|(\tilde{v}_\xi, \tilde{u}_\xi, \tilde{\theta}_\xi)|
            \le C\varepsilon^2 e^{-c\varepsilon|\xi|}, \quad &&\xi \in \mathbb{R},\\
        &|\partial_\xi^k(\tilde{v},\tilde{u},\tilde{\theta})|
            \le C\varepsilon^{k-1}\,|\tilde{v}_\xi|,
            \quad &&k=2,3,4,\quad \xi \in \mathbb{R}.
    \end{aligned}
    \end{equation}
    Set $\sigma_-:=\sqrt{\gamma p_-/v_-}$ and $C_*:=\frac{R\gamma(\gamma+1)\sigma_-}{2(\mu R\gamma+\alpha(\gamma-1)^2)}$. Then the shock speed satisfies
    \begin{equation}\label{eq:sigma-estimate}
        |\sigma-\sigma_-|\le C\varepsilon,
    \end{equation}
    the derivatives satisfy
    \begin{equation}\label{eq:derivative-comparability}
        C^{-1}|\tilde v_\xi| \le |\tilde u_\xi| \le C|\tilde v_\xi|, \qquad
        C^{-1}|\tilde v_\xi| \le |\tilde\theta_\xi| \le C|\tilde v_\xi|, \qquad \xi\in\mathbb{R},
    \end{equation}
    and, more precisely,
    \begin{equation}\label{eq:derivative_relations}
        |\tilde{u}_\xi + \sigma_- \tilde{v}_\xi| \le C\varepsilon|\tilde{v}_\xi|, \qquad \left|\tilde{\theta}_\xi + \frac{(\gamma-1)p_-}{R}\tilde{v}_\xi\right| \le C\varepsilon|\tilde{v}_\xi|, \qquad \xi \in \mathbb{R}.
    \end{equation}
    Moreover,
    \begin{equation}\label{eq:logistic-signed}
        C^{-1}(v_+ - \tilde{v})(\tilde{v} - v_-) \le \tilde{v}_\xi \le C(v_+ - \tilde{v})(\tilde{v} - v_-), \qquad \xi \in \mathbb{R},
    \end{equation}
    and, more precisely,
    \begin{equation}\label{eq:logistic-precise}
        \bigl|\tilde v_\xi-C_*(v_+-\tilde v)(\tilde v-v_-)\bigr|\le C\varepsilon\,(v_+-\tilde v)(\tilde v-v_-),\qquad \xi\in\mathbb R.
    \end{equation}
    Finally, if $\Omega\subset\{(v,\theta):v>0,\ \theta>0\}$ is compact, then $\varepsilon_0$, $c$, and $C$ can be chosen uniformly for all left end states with $(v_-,\theta_-)\in\Omega$.
\end{theorem}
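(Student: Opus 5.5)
We integrate the profile equations \eqref{eq:shock-ode} once from $-\infty$, using \eqref{eq:shock-ode-bc} and the vanishing of all derivatives at the end states. This gives $\tilde u-u_-=-\sigma(\tilde v-v_-)$. The momentum equation becomes a second-order equation for $\tilde v$:
\[
\kappa\Bigl(-\frac{\tilde v''}{\tilde v^5}+\frac{5(\tilde v')^2}{2\tilde v^6}\Bigr)=\sigma^2(\tilde v-v_-)+\tilde p-p_- +\mu\sigma\frac{\tilde v'}{\tilde v}.
\]
After substituting the momentum relation, the energy equation becomes a first-order equation for $\tilde\theta$:
\[
\alpha\frac{\tilde\theta'}{\tilde v}=-\sigma\Bigl(\frac{R}{\gamma-1}(\tilde\theta-\theta_-)+\frac{\kappa(\tilde v')^2}{2\tilde v^5}\Bigr)+\cdots .
\]
The omitted terms are explicit quadratic expressions. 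Setting $w=\tilde v'$, we obtain a three-dimensional autonomous system in $(\tilde v,w,\tilde\theta)$ with two equilibria, $(v_\pm,0,\theta_\pm)$. The Rankine--Hugoniot conditions \eqref{eq:RH} make $(v_+,0,\theta_+)$ an equilibrium. By the implicit function theorem on the Hugoniot curve parametrized by $\varepsilon=v_+-v_-$, we get $\sigma=\sigma_-+O(\varepsilon)$, which is \eqref{eq:sigma-estimate}. We also get $\theta_+-\theta_-=-\frac{(\gamma-1)p_-}{R}\varepsilon+O(\varepsilon^2)$, with constants uniform for $(v_-,\theta_-)\in\Omega$.

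Next we rescale to make the shock strength a singular perturbation parameter. We set $\tilde v=v_-+\varepsilon V$, $\tilde\theta=\theta_-+\varepsilon\Theta$, $w=\varepsilon^2W$ and use the slow variable $z=\varepsilon\xi$. In the critical scaling ($\varepsilon\to0$ with $\sigma\to\sigma_-$), the linearization at the end state has a double zero eigenvalue from the sonic degeneracy, and the other eigenvalues have order one. This is the standard structure behind a scalar Burgers-type slow flow. Concretely, we write the system in a slow--fast form, with fast variables chosen as the combinations $\tilde\theta+\frac{(\gamma-1)p_-}{R}(\tilde v-v_-)$ and $w$ (suitably rescaled). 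The critical manifold is then a graph over $V$ that is normally hyperbolic: the fast linearization is a $2\times2$ matrix whose determinant and trace are nonzero, computed from $\mu,\alpha,\kappa,\sigma_-$. Fenichel's theorem \cite{F79,J95} then yields, for $\varepsilon<\varepsilon_0$, a locally invariant slow manifold that is $O(\varepsilon)$-close in $C^k$ to the critical one. The manifold contains both equilibria, since they lie within $O(\varepsilon)$ in rescaled variables.

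On the slow manifold the flow reduces to the scalar equation
\[
V_z=C_*V(1-V)+\varepsilon g(V,\varepsilon),
\]
where $g$ is smooth and vanishes at $V=0,1$. The coefficient $C_*$ comes out of the expansion: the heat-conduction and viscosity terms combine into the denominator $\mu R\gamma+\alpha(\gamma-1)^2$, and the genuine-nonlinearity coefficient of $p$ gives $\gamma+1$. For small $\varepsilon$ this scalar ODE has the logistic structure, with a unique heteroclinic orbit from $0$ to $1$, along which $V_z>0$. This gives existence, monotonicity of $\tilde v$, and \eqref{eq:logistic-signed}--\eqref{eq:logistic-precise} after undoing the scaling. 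The exponential bounds with rate $c\varepsilon$ follow by integrating the logistic inequality from the normalization $\tilde v(0)=\frac{v_-+v_+}{2}$.

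The graph form of the slow manifold expresses $w$ and $\tilde\theta$ as $\tilde v$-dependent functions. Differentiating these relations gives \eqref{eq:derivative_relations} and \eqref{eq:derivative-comparability}, hence $\tilde u_\xi<0$ and $\tilde\theta_\xi<0$. Repeatedly differentiating the ODE on the manifold gives the bounds on $\partial_\xi^k$, each extra derivative costing a factor $\varepsilon$ in the slow scaling.

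Uniqueness within the monotone class is the main obstacle. The plan is to show that any monotone profile stays in an $O(\varepsilon)$ neighborhood of the equilibria in the original variables, which is automatic for $\tilde v$ and $\tilde\theta$ by monotonicity. We must also show that $w$ is small, and hence that the profile lies in the Fenichel neighborhood. For $w$ we would use an energy-type argument on the second-order equation, or note that a bounded full orbit in that neighborhood must lie on every local slow manifold, because the fast directions are hyperbolic. This hyperbolicity requires a saddle-type fast splitting, or more precisely that no bounded orbit can leave along fast directions. Once the orbit is on the manifold, uniqueness of the scalar heteroclinic gives uniqueness up to translation. Uniformity over compact $\Omega$ follows because every constant (eigenvalue gaps, Fenichel neighborhoods, implicit-function bounds) depends continuously on $(v_-,\theta_-)$.
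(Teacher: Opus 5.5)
\textbf{The genuine gap is uniqueness in the monotone class.}

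First, you integrate \eqref{eq:shock-ode} assuming all derivatives vanish at $\pm\infty$. But \eqref{eq:shock-ode-bc} prescribes only the limits of $(\tilde v,\tilde u,\tilde\theta)$, so for an arbitrary monotone solution you must first show that the three integration constants vanish. The paper does this with $\bar w=-\sqrt{\kappa}\,\bar v'/\bar v^{5/2}$. This function satisfies $\bar w'+\Lambda\bar w=\Phi$ with $\Lambda\ge\mu\sigma v_-^4/\kappa$, and it is integrable by monotonicity; together these give $\bar w\to0$ as $\xi\to-\infty$.

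More seriously, putting a monotone orbit into the Fenichel neighborhood is not ``automatic.'' In your rescaled variables that neighborhood is an $O(1)$ tube around the critical manifold. That means you need $w=O(\varepsilon^2)$ and $\tilde\theta-\theta_-+\frac{(\gamma-1)p_-}{R}(\tilde v-v_-)=O(\varepsilon^2)$ uniformly in $\xi$. Monotonicity only gives $|\tilde\theta-\theta_-|=O(\varepsilon)$ and no pointwise control of $\tilde v'$ at all. The unspecified ``energy-type argument'' is exactly the missing step. Your remark about needing a saddle-type fast splitting is also off: your fast Jacobian has negative trace and positive determinant, so the splitting is purely attracting.

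The idea you are missing is that no global bound is needed. Fix $\varepsilon>0$. The left end state is a hyperbolic equilibrium of your unscaled system in $(\tilde v,w,\tilde\theta)$, equivalently of \eqref{ODE:vwth}.
\begin{itemize}
\item Its Jacobian has negative trace and determinant proportional to $\gamma p_--\sigma^2v_->0$ by \eqref{eq:algebraic-cancellation}. Hence it has exactly one unstable eigenvalue, and the corresponding eigenvector has nonzero $v$-component.
\item A monotone solution converging to this equilibrium as $\xi\to-\infty$ therefore lies on its one-dimensional unstable manifold.
\item Exclude $\bar v\equiv v_-$ on a half-line, which would force the solution to be the constant equilibrium. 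Then the solution lies on the branch in $\{v>v_-\}$, which the constructed profile also traverses.
\item Uniqueness of solutions of the ODE then makes the two profiles translates of each other.
\end{itemize}

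\textbf{The existence argument is otherwise a workable variant of the paper's.} The paper eliminates $\tilde\theta$ via the momentum equation and uses $\varphi_1=\dot\varphi_0$, $\varphi_2=\varepsilon\ddot\varphi_0$ as fast variables. You instead keep $w$ and the temperature combination above, both divided by $\varepsilon^2$. This gives the same critical manifold $W=C_*V(1-V)$, and smoothness at $\varepsilon=0$ again rests on \eqref{eq:algebraic-cancellation}. Your route has the advantage that \eqref{eq:derivative_relations} for $\tilde\theta$ comes directly from the graph.

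Two steps there still need proper arguments.
\begin{itemize}
\item The equilibria do not lie on $M_\varepsilon$ merely because they are $O(\varepsilon)$-close to it, since locally invariant manifolds are not unique. Use, as the paper does, three facts: a zero of the reduced field on the graph yields an equilibrium of the full system; the only equilibria are at $V=0,1$; and the intermediate value theorem.
\item Uniformity over $\Omega$ does not follow from ``continuous dependence'' of Fenichel's $\varepsilon_1$ on the end state. Instead, adjoin $\dot q=0$ for $q=(v_-,\theta_-)$ and apply the theorem once to the enlarged system.
\end{itemize}
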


\begin{remark}
\label{rem:main}

The change of variables $(\xi,u,\sigma)\mapsto(-\xi,-u,-\sigma)$ maps a 1-shock profile to a 3-shock profile, with the end states changing as
\[
    \bigl((v_-,u_-,\theta_-),\,(v_+,u_+,\theta_+)\bigr)\ \mapsto\ \bigl((v_+,-u_+,\theta_+),\,(v_-,-u_-,\theta_-)\bigr).
\]
Hence the conclusions of Theorem~\ref{thm:main} hold for 1-shocks, with $\tilde v_\xi<0$, $\tilde u_\xi<0$, $\tilde\theta_\xi>0$ in place of the signs in \eqref{eq:shock_estimates}, with $-\tilde v_\xi$ in place of $\tilde v_\xi$ in \eqref{eq:logistic-signed}, and with $\sigma_-$ and $p_-$ replaced by $-\sqrt{\gamma p_+/v_+}$ and $p_+$ elsewhere, including in $C_*$. Since the reflected left end state depends on $\varepsilon$, we apply the last statement of Theorem~\ref{thm:main} with a compact set $\Omega$ that contains $(v_+,\theta_+)$ for $\varepsilon\le\frac{v_-}{\gamma+1}$. In the rest of the paper, we consider 3-shocks and assume
\[
 v_-<v_+,\qquad \varepsilon=v_+-v_-,\qquad \sigma=
\sqrt{-\frac{p_+-p_-}{v_+-v_-}}>0.
\]
\end{remark}

The following corollary provides the second-order estimate used in the companion stability analysis \cite{HKK26}.

\begin{corollary}
\label{cor:second-order}
For a given left end state $(v_-,u_-,\theta_-)\in\mathbb R_+\times\mathbb R\times\mathbb R_+$, there exist positive constants $\varepsilon_0$ and $C$ such that the following holds for every right end state $(v_+,u_+,\theta_+)\in\mathbb R_+\times\mathbb R\times\mathbb R_+$ and every shock speed $\sigma\in\mathbb R$ satisfying the Rankine--Hugoniot condition \eqref{eq:RH}, the Lax 3-shock condition in \eqref{eq:Lax}, and $\varepsilon:=|v_+-v_-|<\varepsilon_0$.
Let $(\tilde v,\tilde u,\tilde\theta)$ be the traveling wave solution of Theorem~\ref{thm:main} and define $A_*:=\frac{\gamma(\gamma+1)p_-}{2v_-^2\sigma_-}$. Then
\begin{equation}
\label{eq:secant-estimate}
\left|
    \frac{\tilde p-p_+}{\tilde v-v_+}
    -
    \frac{\tilde p-p_-}{\tilde v-v_-}
    -
    \sigma_-A_*
    \frac{\mu R\gamma}
         {\mu R\gamma+\alpha(\gamma-1)^2}
    (v_+-v_-)
\right|
\le C\varepsilon^2,
\qquad \xi\in\mathbb R.
\end{equation}
Moreover, if $\Omega\subset\{(v,\theta):v>0,\ \theta>0\}$ is compact, then $\varepsilon_0$ and $C$ can be chosen uniformly for all left end states with $(v_-,\theta_-)\in\Omega$.
\end{corollary}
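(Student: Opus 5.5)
We prove the corollary by expanding $\tilde p$ along the profile to second order in $\tilde v-v_\pm$, using the first-order relation \eqref{eq:derivative_relations} to express the $\theta$-variation through the $v$-variation. Throughout, $\varepsilon<\varepsilon_0$, $v_-\le\tilde v\le v_+$, and all constants depend only on $\Omega$. By \eqref{eq:sigma-estimate} and the Rankine--Hugoniot relations, $|\theta_+-\theta_-|$, $|u_+-u_-|$ and $|p_+-p_-|$ are all $O(\varepsilon)$.

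\textbf{Step 1: reduce to a quantity along the orbit.} Define
\[
F(\xi):=\tilde p-p_--\sigma^2(\tilde v-v_-).
\]
Since $p_+-p_-=-\sigma^2\varepsilon$, we have $\tilde p-p_+=F-\sigma^2(\tilde v-v_+)$. Hence
\[
\frac{\tilde p-p_+}{\tilde v-v_+}-\frac{\tilde p-p_-}{\tilde v-v_-}=F\left(\frac{1}{\tilde v-v_+}-\frac{1}{\tilde v-v_-}\right)=\frac{F\,\varepsilon}{(v_+-\tilde v)(\tilde v-v_-)}.
\]
The claim is therefore equivalent to
\[
F=\sigma_-A_*\,\frac{\mu R\gamma}{\mu R\gamma+\alpha(\gamma-1)^2}\,(v_+-\tilde v)(\tilde v-v_-)+O(\varepsilon)\,(v_+-\tilde v)(\tilde v-v_-),
\]
uniformly in $\xi$. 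Note that $F$ vanishes at both ends.

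\textbf{Step 2: integrated profile equations.} Integrate the momentum equation in \eqref{eq:shock-ode} from $-\infty$, using $\tilde u-u_-=-\sigma(\tilde v-v_-)$. This gives
\[
F=\mu\frac{\tilde u'}{\tilde v}+\kappa\Bigl(-\frac{\tilde v''}{\tilde v^5}+\frac{5(\tilde v')^2}{2\tilde v^6}\Bigr).
\]
By \eqref{eq:derivative_relations} and the $k=2$ bound in \eqref{eq:shock_estimates}, the right-hand side equals $-\mu\sigma_-\tilde v_\xi/v_-+O(\varepsilon)\tilde v_\xi$. Inserting \eqref{eq:logistic-precise} then determines $F$ to leading order as a constant times $(v_+-\tilde v)(\tilde v-v_-)$, with relative error $O(\varepsilon)$.

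This route only works if that constant, $-\mu\sigma_-C_*/v_-$, matches the target. I would check this directly; the sign looks suspicious, since $\tilde v_\xi>0$ forces $F<0$. A sign slip in the target expression could be hiding there. If the check fails, I would switch to the Taylor route below.

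\textbf{Taylor route.} Expand $p(\tilde v,\tilde\theta)$ around $(v_-,\theta_-)$ to second order, with $O(\varepsilon^3)$ remainder. For the $\theta$-dependence I would not use the derivative relation alone. Instead I would use the integrated energy equation, in which the heat flux $\alpha\tilde\theta'/\tilde v$ is $O(\varepsilon^2)$ and is expressed via \eqref{eq:logistic-precise}. This yields the $O(\varepsilon^2)$ correction of $\tilde\theta-\theta_-$ beyond its linear $(\gamma-1)p_-/R$ part. It is precisely this correction that produces the weight $\mu R\gamma/(\mu R\gamma+\alpha(\gamma-1)^2)$: the Hugoniot curvature $A_*$ gets split between the viscous and the heat-conductive dissipation. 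The quadratic part of $F$ must vanish at $\tilde v=v_\pm$, so it is a multiple of $(v_+-\tilde v)(\tilde v-v_-)$ plus $O(\varepsilon^3)$.

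\textbf{Main obstacle.} The hard step is Step 2: extracting the exact leading coefficient requires tracking every $O(\varepsilon)$ correction in \eqref{eq:derivative_relations} and \eqref{eq:sigma-estimate} consistently. The error must also be $O(\varepsilon)$ relative to $(v_+-\tilde v)(\tilde v-v_-)$, not merely $O(\varepsilon^3)$ absolutely, because near the end states that product is much smaller than $\varepsilon^2$. I would handle this by writing every remainder as a smooth function times $(\tilde v-v_-)(\tilde v-v_+)$, using \eqref{eq:logistic-signed}. Uniformity over $\Omega$ follows from the uniformity in Theorem~\ref{thm:main}.
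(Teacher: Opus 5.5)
Your Steps 1--2 are the paper's argument: integrate the momentum equation from both ends, write the secant difference as the viscous--capillary flux times $\varepsilon/[(v_+-\tilde v)(\tilde v-v_-)]$, and insert \eqref{eq:logistic-precise}. As written, however, the proposal does not close, because of two sign slips in Step 1. The statement itself has no sign problem.

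First, the right definition is $F=\tilde p-p_-+\sigma^2(\tilde v-v_-)$. This is the quantity your Step 2 formula actually computes, since $-\sigma(\tilde u-u_-)=\sigma^2(\tilde v-v_-)$, and it is the only version that vanishes at $+\infty$. With your minus sign, the identity $\tilde p-p_+=F-\sigma^2(\tilde v-v_+)$ is false. Second, since $\tilde v-v_+<0$,
\[
\frac{1}{\tilde v-v_+}-\frac{1}{\tilde v-v_-}=\frac{\varepsilon}{(\tilde v-v_+)(\tilde v-v_-)}=-\frac{\varepsilon}{(v_+-\tilde v)(\tilde v-v_-)}.
\]
So the secant difference is $-F\varepsilon/[(v_+-\tilde v)(\tilde v-v_-)]$, and the fact that $\tilde v_\xi>0$ forces $F<0$ is exactly what gives a positive result. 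Step 2 then yields the difference $\frac{\mu\sigma_-C_*}{v_-}\varepsilon+O(\varepsilon^2)$. The check you deferred is a one-line identity from \eqref{eq:C1-def} and $\sigma_-^2=\gamma p_-/v_-$:
\[
\frac{\mu\sigma_-C_*}{v_-}=\frac{\mu R\gamma^2(\gamma+1)p_-}{2v_-^2\bigl(\mu R\gamma+\alpha(\gamma-1)^2\bigr)}=\sigma_-A_*\frac{\mu R\gamma}{\mu R\gamma+\alpha(\gamma-1)^2}.
\]
With these corrections, your route is the paper's proof.

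The fallback Taylor route would not have rescued you. The integrated momentum equation is an exact identity for $\tilde p$, so any correct expansion of $F$, via the energy equation or otherwise, gives the same negative $F$. It would therefore collide with your Step 1 in the same way. It is also unnecessary: eliminating $\tilde p$ through the momentum equation means no second-order information on $\tilde\theta$ is needed. The splitting of $A_*$ between viscosity and heat conduction that you describe is already encoded in $C_*$.

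Likewise, your ``main obstacle'' is not an obstacle. Every correction term is $O(\varepsilon)\tilde v_\xi$: $\sigma-\sigma_-$, $1/\tilde v-1/v_-$, $\tilde v_{\xi\xi}$, $(\tilde v_\xi)^2$, and the $O(\varepsilon)$ term in \eqref{eq:logistic-precise}. By \eqref{eq:logistic-signed} each is then $O(\varepsilon)(v_+-\tilde v)(\tilde v-v_-)$, so the relative error control near the end states comes for free.
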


\subsection{Main Ideas of the Proof}
We integrate the profile equations \eqref{eq:shock-ode} once. The integrated mass and momentum equations express $\tilde u$ and $\tilde\theta$ in terms of $\tilde v$, and the integrated energy equation becomes a third-order equation for $\tilde v$. As in \cite{EEKO25}, we set $\tilde v(\xi)=v_-+\varepsilon\varphi_0(z)$, so that $z=\varepsilon\xi$ is slow while $\xi$ is fast, and take $\varphi_0$, $\varphi_1=\dot\varphi_0$, and $\varphi_2=\varepsilon\ddot\varphi_0$ as unknowns. This gives the slow-fast system \eqref{system-slow}. At $\varepsilon=0$, the slow-fast system reduces to algebraic relations, which define the critical manifold
\[
    \bigl\{\varphi_1=C_*\varphi_0(1-\varphi_0),\ \varphi_2=0\bigr\}.
\]

To obtain a locally invariant manifold near the critical manifold for small $\varepsilon>0$, we apply Fenichel's first theorem (Proposition~\ref{prop:fenichel}), which requires the right-hand side of \eqref{system-slow} to be $C^\infty$ up to $\varepsilon=0$. The main difficulty is a term of order $1/\varepsilon$ in this right-hand side. By Lemma~\ref{lem:RH-parametrization}, the Rankine--Hugoniot condition gives $\sigma=\sigma(\varepsilon)$, and the identity \eqref{eq:algebraic-cancellation} for $\sigma(\varepsilon)$ cancels the factor $1/\varepsilon$. To obtain constants uniform in $\Omega$, we adjoin the equation $\dot q=0$ for the left end state $q=(v_-,\theta_-)$ to \eqref{system-slow} and apply the theorem to the resulting enlarged system \eqref{system-enlarged}. On the manifold $M_\varepsilon$ obtained from the theorem, called the slow manifold, the first equation of \eqref{system-slow} becomes the scalar equation
\[
    \frac{d\varphi_0}{dz}=C_*\varphi_0(1-\varphi_0)+O(\varepsilon).
\]
Lemma~\ref{lem:invariance} shows that each zero of the right-hand side gives an equilibrium of \eqref{system-slow}, and Lemma~\ref{lem:endpoint} shows that the right-hand side vanishes exactly at $0$ and $1$ and is positive in between. The profile is the solution of the scalar equation connecting $0$ to $1$.

Uniqueness is proved in the class of all monotone solutions of \eqref{eq:shock-ode}--\eqref{eq:shock-ode-bc}. Such a solution satisfies the integrated profile equations with three integration constants, and the far-field conditions \eqref{eq:shock-ode-bc} prescribe only the limits of the solution. We show that these constants vanish. For the constant in the integrated momentum equation, we use the auxiliary variable of \cite{BDD06}, which satisfies a linear first-order equation with a coefficient bounded below by a positive constant. The solution then lies on the one-dimensional unstable manifold of the left end state, and, as in \cite{EEKO25,EE26}, it is a translate of the constructed profile.

The estimates in Theorem~\ref{thm:main} compare $\tilde v_\xi$ with $(\tilde v-v_-)(v_+-\tilde v)$ up to the end states, where both vanish. Since the equilibria $(0,0,0)$ and $(1,0,0)$ lie on $M_\varepsilon$, the $O(\varepsilon)$ term of the scalar equation vanishes at $\varphi_0=0$ and $\varphi_0=1$ and is therefore of order $\varepsilon\varphi_0(1-\varphi_0)$. Hence
\[
    \tilde v_\xi=\bigl[C_*+O(\varepsilon)\bigr](\tilde v-v_-)(v_+-\tilde v),
\]
where the $O(\varepsilon)$ term is uniform in $\xi\in\mathbb R$. This factorization yields the bounds \eqref{eq:logistic-signed} and \eqref{eq:logistic-precise}, the exponential decay, and Corollary~\ref{cor:second-order}. The bounds on higher derivatives follow from the scalar equation, and the formula for $\tilde\theta$ gives $\tilde\theta_\xi<0$.

\subsection{Organization of the Paper}
The paper is organized as follows. In Section~\ref{sec:existence}, we recall the geometric singular perturbation framework, reduce the profile equations to a slow-fast system, verify its hypotheses, and construct the monotone profile from the scalar equation on the slow manifold. In Section~\ref{sec:uniqueness_estimates}, we prove the uniqueness of monotone profiles, the pointwise estimates in Theorem~\ref{thm:main}, and Corollary~\ref{cor:second-order}.

\subsection{Notation}
\begin{itemize}
\item A dot denotes differentiation with respect to the slow variable $z=\varepsilon\xi$ of Section~\ref{sec:existence}.
\item In Sections~\ref{sec:existence} and~\ref{sec:uniqueness_estimates}, $C$ and $c$ denote positive constants that may change from line to line, depend only on $\Omega$ and $\gamma,R,\mu,\alpha,\kappa$, and are independent of $\varepsilon$ and $\xi$.
\item For nonnegative quantities $A$ and $B$, we write $A\sim B$ if $C^{-1}B\le A\le CB$ with such a constant $C$.
\item We write $A=O(B)$ if $|A|\le C|B|$ with such a constant $C$.
\end{itemize}

\section{Existence of Monotone Traveling Waves} \label{sec:existence}
\setcounter{equation}{0}

We fix a compact set $\Omega\subset\{(v,\theta):v>0,\ \theta>0\}$ and prove the last statements of Theorem~\ref{thm:main} and Corollary~\ref{cor:second-order} for this $\Omega$ in Sections~\ref{sec:existence} and~\ref{sec:uniqueness_estimates}. Taking $\Omega=\{(v_-,\theta_-)\}$ gives the first statements for the given left end state.

\subsection{Geometric Singular Perturbation Framework}
\label{subsec:general-framework}

Let $\mathbf{x}=\mathbf{x}(\xi)\in\mathbb R^n$ and $\mathbf{y}=\mathbf{y}(\xi)\in\mathbb R^\ell$ denote the fast and slow phase variables, respectively, where $\xi$ is the fast coordinate. We consider the fast system
\begin{equation}\label{eq:general_fast_system}
    \begin{cases}
        \displaystyle \frac{d\mathbf{x}}{d\xi} = \mathcal F(\mathbf{x},\mathbf{y},\varepsilon),\\
        \displaystyle \frac{d\mathbf{y}}{d\xi} = \varepsilon\mathcal G(\mathbf{x},\mathbf{y},\varepsilon),
    \end{cases}
\end{equation}
where $\varepsilon$ is a small parameter. For $\varepsilon>0$, reparametrizing the same trajectories by the slow coordinate $z=\varepsilon\xi$, and retaining the notation $\mathbf{x}$ and $\mathbf{y}$, we obtain the equivalent slow system
\begin{equation}\label{eq:general_slow_fast}
    \begin{cases}
        \displaystyle \varepsilon\frac{d\mathbf{x}}{dz} = \mathcal F(\mathbf{x},\mathbf{y},\varepsilon),\\
        \displaystyle \frac{d\mathbf{y}}{dz} = \mathcal G(\mathbf{x},\mathbf{y},\varepsilon).
    \end{cases}
\end{equation}

At $\varepsilon=0$, the limiting fast system is
\begin{equation}\label{eq:general_fast_limit}
    \frac{d\mathbf{x}}{d\xi} = \mathcal F(\mathbf{x},\mathbf{y},0), \qquad \frac{d\mathbf{y}}{d\xi} = 0.
\end{equation}
We choose a subset $M_0$ of its equilibrium set $\{\mathcal F(\mathbf{x},\mathbf{y},0)=0\}$ and impose the following hypotheses:
\begin{itemize}
\item[\textnormal{(H1)}]
The functions $\mathcal F$ and $\mathcal G$ are $C^\infty$ on $U\times I$, where $U\subset\mathbb R^{n+\ell}$ is open and $I\subset\mathbb R$ is an open interval containing $0$.

\item[\textnormal{(H2)}]
The set $M_0$ is a compact $C^\infty$ manifold, possibly with boundary, and is normally hyperbolic relative to the limiting fast system \eqref{eq:general_fast_limit}.

\item[\textnormal{(H3)}]
The manifold $M_0$ is the graph of a $C^\infty$ function $h^0:Y\to\mathbb R^n$:
\[
    M_0 = \left\{ (\mathbf{x},\mathbf{y}): \mathbf{x}=h^0(\mathbf{y}),\ \mathbf{y}\in Y \right\},
\]
where $Y\subset\mathbb R^\ell$ is a compact, simply connected domain with smooth boundary.
\end{itemize}

Under \textnormal{(H3)}, normal hyperbolicity means that $D_{\mathbf{x}}\mathcal F\bigl(h^0(\mathbf{y}),\mathbf{y},0\bigr)$ has no eigenvalue on the imaginary axis for $\mathbf{y}\in Y$. Since $Y$ is compact and this matrix depends continuously on $\mathbf{y}$, the separation from the imaginary axis is uniform on $Y$.

\begin{definition}[\cite{J95}; see also {\cite[Definition~2.2]{EEKO25}}]\label{def:loc-inv}
Consider an autonomous system of ordinary differential equations with a locally Lipschitz vector field. A set $M$ is locally invariant under the system if there is an open set $V\supset M$ with the following property. If a trajectory of the system starts on $M$ and stays in $V$ on a time interval $[0,T]$, then it stays on $M$ on $[0,T]$. The same holds with $[0,T]$ replaced by $[-T,0]$.
\end{definition}

The following proposition states Fenichel's first theorem. See \cite{F79} and \cite[Theorems~1 and~2]{J95}. See also \cite[Proposition~2.1 and Remark~2.1]{EEKO25} and \cite[Proposition~2.1 and Remark~2.1]{EE26}.

\begin{proposition}\label{prop:fenichel}
Fix a finite integer $r\ge1$ and assume \textnormal{(H1)--(H3)}. There exist $\varepsilon_1>0$ and a family $h^\varepsilon:Y\to\mathbb R^n$ for $|\varepsilon|<\varepsilon_1$, such that $(\mathbf{y},\varepsilon)\mapsto h^\varepsilon(\mathbf{y})$ is jointly $C^r$ and $h^0$ is the graph function appearing in \textnormal{(H3)}. For every $0<|\varepsilon|<\varepsilon_1$, the graph
\[
    \left\{ (\mathbf{x},\mathbf{y}): \mathbf{x}=h^\varepsilon(\mathbf{y}),\ \mathbf{y}\in Y \right\}
\]
is contained in an $\ell$-dimensional manifold $M_\varepsilon$ that is locally invariant under the fast system \eqref{eq:general_fast_system}.
\end{proposition}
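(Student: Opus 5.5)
This result is Fenichel's first theorem, taken from \cite{F79} and \cite[Theorems~1 and~2]{J95}. If we were to reprove it, the plan would be a center-manifold-type argument in which $\varepsilon$ is treated as an extra phase variable.

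First, we append the trivial equation $d\varepsilon/d\xi=0$ to the fast system \eqref{eq:general_fast_system}. In the enlarged phase space $(\mathbf{x},\mathbf{y},\varepsilon)$, every point of $M_0\times\{0\}$ is an equilibrium. By (H2)--(H3), the linearization at such a point has the following structure:
\begin{itemize}
\item an $(\ell+1)$-dimensional zero block, coming from the $\mathbf{y}$ and $\varepsilon$ directions;
\item the block $D_{\mathbf{x}}\mathcal F(h^0(\mathbf{y}),\mathbf{y},0)$, whose spectrum is bounded away from the imaginary axis uniformly on the compact set $Y$.
\end{itemize}
Next, we change coordinates $\mathbf{x}=h^0(\mathbf{y})+\mathbf{w}$, so that $M_0$ becomes $\{\mathbf{w}=0\}$. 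We then extend $Y$ to a slightly larger open set $Y'$, which is possible since $\partial Y$ is smooth and the hyperbolicity is uniform. We multiply the nonlinear terms and the slow field by a smooth cutoff that equals $1$ on a neighborhood
\[
V=\{\mathbf{y}\in Y',\ |\mathbf{w}|<\delta,\ |\varepsilon|<\delta\},
\]
and that makes the modified vector field globally a small $C^r$ perturbation of the linear hyperbolic system $(\dot{\mathbf{w}},\dot{\mathbf{y}},\dot\varepsilon)=(A(\mathbf{y})\mathbf{w},0,0)$.

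For the modified system, we construct a globally invariant manifold as a graph $\mathbf{w}=\tilde h(\mathbf{y},\varepsilon)$. This can be done either by the Lyapunov--Perron method or by the Hadamard graph transform. In the Lyapunov--Perron version, we look for solutions with slow exponential growth $e^{\eta|\xi|}$, where $\eta$ is much smaller than the hyperbolic rate $\beta$. These solutions are the fixed points of an integral operator that splits the $\mathbf{w}$-component into its stable and unstable parts. The smallness of the cutoff nonlinearity in $C^1$, controlled by $\delta$, makes this operator a contraction, which gives a Lipschitz graph. Once $\tilde h$ is known, we define $h^\varepsilon(\mathbf{y}):=h^0(\mathbf{y})+\tilde h(\mathbf{y},\varepsilon)$. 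At $\varepsilon=0$, the set $\{\mathbf{w}=0\}$ is invariant, so uniqueness of the bounded-growth graph forces $\tilde h(\cdot,0)=0$ and recovers $h^0$. For $0<|\varepsilon|<\varepsilon_1\le\delta$, the graph lies in the manifold $M_\varepsilon$ given by the invariant graph over $Y'$. Local invariance in the sense of Definition~\ref{def:loc-inv} follows because the modified field agrees with the original one on $V$: a trajectory that stays in $V$ is a trajectory of both systems, and the invariance of the modified graph keeps it on $M_\varepsilon$.

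The main obstacle is the joint $C^r$ regularity of $(\mathbf{y},\varepsilon)\mapsto h^\varepsilon(\mathbf{y})$. We would prove it inductively by the fiber contraction theorem applied to the formally differentiated fixed-point equation. The $k$-th derivative requires the gap condition $k\eta<\beta$. This is why $r$ must be finite: for a prescribed $r$, we choose $\delta$, and hence the growth rate $\eta$ on the cutoff region, small enough that $r\eta<\beta$. This in turn determines $\varepsilon_1$. It also explains why only the smoothness of $\mathcal F$ and $\mathcal G$ up to $\varepsilon=0$ in (H1), and not analyticity, is needed.
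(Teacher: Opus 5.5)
Your proposal agrees with the paper, which likewise does not prove Proposition~\ref{prop:fenichel} but quotes it as Fenichel's first theorem from \cite{F79} and \cite[Theorems~1 and~2]{J95} (see also \cite{EEKO25,EE26} for this formulation). The Lyapunov--Perron outline you add goes beyond the paper and is reasonable as a sketch, with two caveats: the hyperbolic block $A(\mathbf y)$ varies along the slowly moving $\mathbf y$-trajectory, so the integral operator must be built from an exponential dichotomy for $d\mathbf w/d\xi=A(\mathbf y(\xi))\mathbf w$ rather than as a perturbation of one fixed linear splitting; and the cut-off field is not $C^1$-small in $\varepsilon$, which is harmless because $\varepsilon$ is conserved and can be frozen as a parameter.
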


For $\varepsilon>0$, the fast and slow systems differ only by the rescaling $z=\varepsilon\xi$, so $M_\varepsilon$ is also locally invariant under \eqref{eq:general_slow_fast}.

We apply Proposition~\ref{prop:fenichel} with $r=6$. Setting $s(\mathbf{y},\varepsilon):=\int_0^1\partial_\beta h^\beta(\mathbf{y})|_{\beta=\tau\varepsilon}\,d\tau$, which is jointly $C^5$, we obtain
\begin{equation}\label{eq:fenichel-expansion}
    h^\varepsilon(\mathbf{y}) = h^0(\mathbf{y}) + \varepsilon s(\mathbf{y},\varepsilon).
\end{equation}
In what follows, a function is called smooth if it is $C^5$. This is the regularity used in Sections~\ref{subsubsec:reconstruction} and~\ref{subsubsec:higher}.

\subsection{Profile Equations and Slow-Fast Reduction} \label{subsec:slow-fast-reduction}

We first parametrize the pressure jump $p_+-p_-$ and the shock speed $\sigma$ by the shock strength $\varepsilon$. This parametrization will later be used to verify the regularity hypothesis \textnormal{(H1)} at $\varepsilon=0$.

\begin{lemma}\label{lem:RH-parametrization}
Let $(v_-,u_-,\theta_-)\in\mathbb R_+\times\mathbb R\times\mathbb R_+$ be a left end state, and define
\begin{equation}\label{eq:sigma-explicit}
    \sigma(\varepsilon):=\sqrt{\frac{\gamma p_-}{v_-+\tfrac{\gamma+1}{2}\varepsilon}},
    \qquad |\varepsilon|<\frac{v_-}{\gamma+1}.
\end{equation}
Then the following holds.
\begin{enumerate}[(i)]
\item Let a right end state $(v_+,u_+,\theta_+)\in\mathbb R_+\times\mathbb R\times\mathbb R_+$ and a shock speed $\sigma\in\mathbb R$ satisfy the Rankine--Hugoniot condition \eqref{eq:RH} and the Lax 3-shock condition in \eqref{eq:Lax}, and let $\varepsilon:=v_+-v_-<\frac{v_-}{\gamma+1}$. Then $\varepsilon>0$ and
\begin{equation}\label{eq:p-jump}
    p_+-p_-=-\frac{\gamma p_-\varepsilon}{v_-+\tfrac{\gamma+1}{2}\varepsilon},
    \qquad \sigma=\sigma(\varepsilon).
\end{equation}
\item The function $\sigma$ satisfies
\begin{equation}\label{eq:algebraic-cancellation}
    \sigma(\varepsilon)^2v_--\gamma p_-
    =-\frac{\gamma p_-(\gamma+1)}{2\bigl(v_-+\tfrac{\gamma+1}{2}\varepsilon\bigr)}\varepsilon,
\end{equation}
and it is $C^\infty$ and positive with $\sigma(0)=\sigma_-$. If $(v_-,\theta_-)\in\Omega$, then
\begin{equation}\label{eq:sigma-small-amplitude}
    |\sigma(\varepsilon)-\sigma_-|\le C|\varepsilon|.
\end{equation}
\end{enumerate}
\end{lemma}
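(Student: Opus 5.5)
The plan is to reduce the Rankine--Hugoniot system \eqref{eq:RH} to the classical Hugoniot relation between $(v,p)$ and then solve it explicitly in terms of $\varepsilon$.

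\textbf{Step 1: sign of $\varepsilon$.} First, $\varepsilon>0$ follows from the consequences of the Lax 3-shock condition recorded after \eqref{eq:Lax}, namely $v_+>v_-$.

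\textbf{Step 2: the Hugoniot relation.} From the first two equations of \eqref{eq:RH} we get
\[
u_+-u_-=-\sigma(v_+-v_-), \qquad p_+-p_-=\sigma(u_+-u_-)=-\sigma^2\varepsilon .
\]
We then rewrite the energy equation. Using $p_+u_+-p_-u_-=\tfrac12(p_++p_-)(u_+-u_-)+\tfrac12(p_+-p_-)(u_++u_-)$ and
\[
E_+-E_-=\tfrac{R}{\gamma-1}(\theta_+-\theta_-)+\tfrac12(u_++u_-)(u_+-u_-),
\]
the kinetic terms cancel after substituting $\sigma(u_+-u_-)=p_+-p_-$. What remains is the Hugoniot relation
\[
\frac{1}{\gamma-1}\bigl(p_+v_+-p_-v_-\bigr)=-\tfrac12(p_++p_-)(v_+-v_-),
\]
where we used $R\theta_\pm=p_\pm v_\pm$.

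\textbf{Step 3: solving for the pressure jump.} Write $v_+=v_-+\varepsilon$ and $P:=p_+-p_-$. Multiplying the Hugoniot relation by $2(\gamma-1)$ gives the linear equation
\[
2P(v_-+\varepsilon)+2p_-\varepsilon+(\gamma-1)(2p_-+P)\varepsilon=0 .
\]
Hence
\[
P\bigl(2v_-+(\gamma+1)\varepsilon\bigr)=-2\gamma p_-\varepsilon,
\]
which is \eqref{eq:p-jump}. Since $\sigma>0$ for a 3-shock and $\sigma^2=-P/\varepsilon$, we obtain $\sigma=\sigma(\varepsilon)$.

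\textbf{Step 4: the identity \eqref{eq:algebraic-cancellation}.} This is direct algebra:
\[
\sigma^2v_--\gamma p_-=\gamma p_-\Bigl(\frac{v_-}{v_-+\frac{\gamma+1}{2}\varepsilon}-1\Bigr),
\]
which equals the stated right-hand side.

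\textbf{Step 5: smoothness and the uniform bound.} For $|\varepsilon|<v_-/(\gamma+1)$, the denominator satisfies $v_-+\tfrac{\gamma+1}{2}\varepsilon>v_-/2>0$. So $\sigma$ is the square root of a positive smooth function; it is therefore $C^\infty$ and positive, with $\sigma(0)=\sqrt{\gamma p_-/v_-}=\sigma_-$.

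For \eqref{eq:sigma-small-amplitude}, use the mean value theorem with
\[
\sigma'(\varepsilon)=-\tfrac{\gamma+1}{4}\,\sigma(\varepsilon)\Big/\bigl(v_-+\tfrac{\gamma+1}{2}\varepsilon\bigr).
\]
On this range $\sigma(\varepsilon)\le\sqrt{2\gamma p_-/v_-}$, so
\[
|\sigma'|\le\frac{\gamma+1}{2v_-}\sqrt{\frac{2\gamma p_-}{v_-}} .
\]
This bound is uniform for $(v_-,\theta_-)$ in the compact set $\Omega$, because $v_-$ is bounded below and $p_-=R\theta_-/v_-$ is bounded there.

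\textbf{Main obstacle.} The only step with real content is the energy-equation bookkeeping in Step 2, specifically the cancellation of the kinetic terms. Once the Hugoniot relation is obtained, the rest is linear algebra.
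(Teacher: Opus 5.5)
Your proof is correct and follows the paper's route. The mass and momentum conditions give $p_+-p_-=-\sigma^2\varepsilon$; the energy condition then reduces to the Hugoniot relation, which is linear in $p_+-p_-$; and $\sigma>0$ together with the mean value theorem finishes the argument. There is one thing to tighten in Step~1. The paper states the sign list after \eqref{eq:Lax} without proof, and this lemma's proof is exactly where $\varepsilon>0$ is justified: if $\varepsilon\le0$, then $p_+-p_-=-\sigma^2\varepsilon\ge0$ and $v_+\le v_-$, so $p_+/v_+\ge p_-/v_-$, contradicting the Lax inequality. You should give that one-line argument rather than cite the list. Likewise, say explicitly in Step~2 that reaching the Hugoniot relation requires dividing by $\sigma>0$.
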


\begin{proof}
By the mass and momentum conditions $\eqref{eq:RH}_1$ and $\eqref{eq:RH}_2$, we obtain
\[
    u_+-u_-=-\sigma\varepsilon,\qquad p_+-p_-=-\sigma^2\varepsilon.
\]
Since $\sigma>0$ for a Lax 3-shock, these relations and $R\theta_\pm=p_\pm v_\pm$ from \eqref{eq:state_laws} reduce the energy condition $\eqref{eq:RH}_3$ to
\[
    \frac{p_+v_+-p_-v_-}{\gamma-1}+\frac12(p_++p_-)\varepsilon=0,
    \quad\text{that is,}\quad
    \gamma p_-\varepsilon+\Bigl(v_-+\tfrac{\gamma+1}{2}\varepsilon\Bigr)(p_+-p_-)=0.
\]
If $\varepsilon\le0$, then $p_+\ge p_-$ and $v_+\le v_-$, which contradicts the Lax 3-shock condition in \eqref{eq:Lax}. Hence $\varepsilon>0$, and the identity above is equivalent to the formula for $p_+-p_-$ in \eqref{eq:p-jump}. With $\sigma>0$, the relation $p_+-p_-=-\sigma^2\varepsilon$ then gives $\sigma=\sigma(\varepsilon)$.

The identity \eqref{eq:algebraic-cancellation} and $\sigma(0)=\sigma_-$ follow from \eqref{eq:sigma-explicit}. For $|\varepsilon|<\frac{v_-}{\gamma+1}$, we have
\[
    v_-+\tfrac{\gamma+1}{2}\varepsilon>\tfrac{v_-}{2},
\]
so $\varepsilon\mapsto\sigma(\varepsilon)$ is $C^\infty$ and positive, and $|\sigma'(\varepsilon)|\le C$ for $(v_-,\theta_-)\in\Omega$, which yields \eqref{eq:sigma-small-amplitude}. See also \cite{EEKO25} for the same computation.
\end{proof}

We now reduce the profile equations to a single equation for $\tilde v$. Every solution of the integrated system below solves the profile equations \eqref{eq:shock-ode}, and Section~\ref{subsec:uniqueness} shows that every monotone solution of \eqref{eq:shock-ode}--\eqref{eq:shock-ode-bc} solves the integrated system. The system reads
\begin{subequations}\label{eq:int-system}
\begin{align}
    &-\sigma(\tilde v-v_-)-(\tilde u-u_-)=0, \label{eq:int1}\\
    &-\sigma(\tilde u-u_-)+(\tilde p-p_-)
     =\mu\frac{\tilde u'}{\tilde v} +\kappa\left(-\frac{\tilde v''}{\tilde v^5} +\frac{5(\tilde v')^2}{2\tilde v^6}\right), \label{eq:int2}\\
    &-\sigma(\tilde E-E_-)+(\tilde p\tilde u-p_-u_-)
     =\alpha\frac{\tilde\theta'}{\tilde v} +\mu\frac{\tilde u\tilde u'}{\tilde v} +\kappa\tilde u\left(-\frac{\tilde v''}{\tilde v^5} +\frac{5(\tilde v')^2}{2\tilde v^6}\right) +\kappa\frac{\tilde v'\tilde u'}{\tilde v^5}. \label{eq:int3}
\end{align}
\end{subequations}
The Rankine--Hugoniot condition \eqref{eq:RH} ensures that the right end state is also an equilibrium of \eqref{eq:int-system}.

The integrated mass equation \eqref{eq:int1} gives
\begin{equation} \label{eq:mass-relation}
    \tilde u-u_-=-\sigma(\tilde v-v_-), \qquad \tilde u'=-\sigma\tilde v'.
\end{equation}
With these relations, the integrated momentum and energy equations \eqref{eq:int2} and \eqref{eq:int3} yield
\begin{equation}\label{eq:int3-1}
    -\sigma\left( \frac{R}{\gamma-1}(\tilde\theta-\theta_-) -\frac{\kappa(\tilde v')^2}{2\tilde v^5} -\frac{\sigma^2}{2}(\tilde v-v_-)^2 \right) -\sigma p_-(\tilde v-v_-) =\alpha\frac{\tilde\theta'}{\tilde v}.
\end{equation}
The integrated momentum equation \eqref{eq:int2}, with \eqref{eq:mass-relation} and $\tilde p=R\tilde\theta/\tilde v$, determines the temperature,
\begin{equation}\label{eq:theta_expression}
    \tilde\theta = \frac{\tilde v}{R}\left[ p_- -\sigma^2(\tilde v-v_-) -\mu\sigma\frac{\tilde v'}{\tilde v} +\kappa\left( -\frac{\tilde v''}{\tilde v^5} +\frac{5(\tilde v')^2}{2\tilde v^6} \right) \right].
\end{equation}
Hence, by \eqref{eq:int3-1} and the formula for $\tilde\theta$, $\tilde v$ satisfies the third-order equation
\begin{equation}\label{eq:ODE-v}
\begin{aligned}
    &\frac{\alpha}{R\tilde v}\left( (p_-+\sigma^2v_-)\tilde v' -\frac{25\kappa(\tilde v')^3}{2\tilde v^6} -2\sigma^2\tilde v\tilde v' +9\kappa\frac{\tilde v'\tilde v''}{\tilde v^5} -\mu\sigma\tilde v'' -\frac{\kappa\tilde v'''}{\tilde v^4} \right)\\
    &\qquad =\frac{\kappa\sigma}{2}\frac{(\tilde v')^2}{\tilde v^5} +\frac{\sigma\tilde v}{\gamma-1}\left( \mu\sigma\frac{\tilde v'}{\tilde v} -\frac{5\kappa(\tilde v')^2}{2\tilde v^6} +\sigma^2(\tilde v-v_-) +\kappa\frac{\tilde v''}{\tilde v^5} \right)\\
    &\qquad\quad
      -\frac{\sigma p_-(\tilde v-v_-)}{\gamma-1} +\frac{\sigma^3}{2}(\tilde v-v_-)^2 -\sigma p_-(\tilde v-v_-).
\end{aligned}
\end{equation}

We now introduce the small-amplitude scaling
\begin{equation*}
    \tilde v(\xi)=v_-+\varepsilon\varphi_0(z), \qquad z=\varepsilon\xi,
    \qquad
    \frac{d^k\tilde v}{d\xi^k} = \varepsilon^{k+1}\frac{d^k\varphi_0}{dz^k}
    \quad (k\ge 1).
\end{equation*}
Setting $\varphi_1=\dot\varphi_0$ and $\varphi_2=\varepsilon\ddot\varphi_0=\varepsilon\dot\varphi_1$, equation \eqref{eq:ODE-v} is equivalent to the slow-fast system
\begin{equation}\label{system-slow}
\begin{cases}
    \dot\varphi_0=\varphi_1,\\
    \varepsilon\dot\varphi_1=\varphi_2,\\
    \varepsilon\dot\varphi_2 = f(\varphi_0,\varphi_1,\varphi_2,\varepsilon),
\end{cases}
\end{equation}
where
\begin{equation}\label{eq:f-def}
\begin{aligned}
    &f(\varphi_0,\varphi_1,\varphi_2,\varepsilon;q) \\
    &\quad= -\frac{1}{\varepsilon}g(\varphi_0,\varepsilon;q)
     -\frac{R\sigma^2(\varepsilon\varphi_0+v_-)^5}{\kappa\alpha(\gamma-1)} \left[\frac{\sigma(\gamma+1)}{2}\varphi_0^2 +\mu\varphi_1\right]
     -\frac{R\sigma(\varepsilon\varphi_0+v_-)}{\alpha(\gamma-1)}\varphi_2\\
    &\qquad +\frac{(\varepsilon\varphi_0+v_-)^4}{\kappa} \left[ (p_--\sigma^2v_-)\varphi_1 -2\sigma^2\varepsilon\varphi_0\varphi_1 -\mu\sigma\varphi_2 \right]\\
    &\qquad -\frac{R(\gamma-6)}{2\alpha(\gamma-1)}\sigma\varepsilon^2\varphi_1^2
     +\frac{\varepsilon^2\varphi_1}{\varepsilon\varphi_0+v_-} \left[ -\frac{25}{2}\frac{\varepsilon^2\varphi_1^2}{\varepsilon\varphi_0+v_-} +9\varphi_2 \right],
\end{aligned}
\end{equation}
and
\begin{equation}\label{eq:g-def}
    g(\varphi_0,\varepsilon;q) := \frac{R(\varepsilon\varphi_0+v_-)^5\varphi_0}{\kappa\alpha(\gamma-1)}\sigma\bigl(\sigma^2v_- -\gamma p_-\bigr).
\end{equation}
Here $q:=(v_-,\theta_-)$, and $f$ and $g$ depend on $q$ through $v_-$, $p_-=R\theta_-/v_-$, and the function $\sigma(\varepsilon)$ in \eqref{eq:sigma-explicit}.

\subsection{Hypotheses and Equilibria}\label{subsec:hypotheses}

We adjoin $\dot q=0$ to the slow-fast system \eqref{system-slow}, with $q=(v_-,\theta_-)$ as in \eqref{eq:f-def}, and obtain the enlarged system
\begin{equation}\label{system-enlarged}
\begin{cases}
    \dot\varphi_0=\varphi_1,\qquad \dot q=0,\\
    \varepsilon\dot\varphi_1=\varphi_2,\\
    \varepsilon\dot\varphi_2=f(\varphi_0,\varphi_1,\varphi_2,\varepsilon;q),
\end{cases}
\end{equation}
which has the form of the slow system \eqref{eq:general_slow_fast} with $\mathbf x=(\varphi_1,\varphi_2)$, $\mathbf y=(\varphi_0,q)$, $\mathcal F=(\varphi_2,f)$, and $\mathcal G=(\varphi_1,0)$. Set $K:=[-1,2]$, and let $\hat K$ be a compact convex set with smooth boundary such that
\[
    K\times\Omega\subset\hat K\subset\mathbb R\times\mathbb R_+\times\mathbb R_+ .
\]
The slow variable $(\varphi_0,q)$ ranges over $\hat K$. Thus $\hat K$ is the domain $Y$ in \textnormal{(H3)}, and for each $q\in\Omega$ the slice $\{\varphi_0:(\varphi_0,q)\in\hat K\}$ contains $K$. We verify \textnormal{(H1)--(H3)} for \eqref{system-enlarged}. In what follows, we suppress the dependence on $q$ in the notation.

\subsubsection{Verification of \textnormal{(H1)} and Equilibria for $\varepsilon > 0$}\label{subsubsec:H1}

In \eqref{eq:f-def}, only the term $-g/\varepsilon$ is singular at $\varepsilon=0$. By \eqref{eq:algebraic-cancellation} and \eqref{eq:g-def},
\begin{equation}\label{eq:-g/e}
    -\frac{1}{\varepsilon}g(\varphi_0,\varepsilon)=\frac{R(v_-+\varepsilon\varphi_0)^5\varphi_0}{\kappa\alpha(\gamma-1)}\sigma(\varepsilon)\frac{\gamma p_-(\gamma+1)}{2\bigl(v_-+\tfrac{\gamma+1}{2}\varepsilon\bigr)}.
\end{equation}
Since $\hat K$ is compact and $v_->0$ on $\hat K$, there exists $\varepsilon_2>0$ such that
\[
    \varepsilon_2<\frac{1}{\gamma+1}\inf_{\hat K}v_-
    \qquad\text{and}\qquad
    v_-+\varepsilon\varphi_0>0\ \ \text{for }(\varphi_0,q)\in\hat K,\ |\varepsilon|<\varepsilon_2 .
\]
By \eqref{eq:-g/e}, $f$ extends to a $C^\infty$ function of $(\varphi_0,\varphi_1,\varphi_2,q,\varepsilon)$ on $U\times I$, where $I=(-\varepsilon_2,\varepsilon_2)$ and $U\subset\mathbb R^5$ is an open neighborhood of $\{(\varphi_0,\varphi_1,\varphi_2,q):(\varphi_0,q)\in\hat K\}$. This is \textnormal{(H1)}. In particular, at $\varepsilon=0$, \eqref{eq:-g/e} gives
\begin{equation} \label{eq:g-limit}
    \left.\left(-\frac{1}{\varepsilon}g(\varphi_0,\varepsilon)\right)\right|_{\varepsilon=0} = \frac{\sigma_-^3(\gamma+1)R v_-^5}{2\kappa\alpha(\gamma-1)}\varphi_0, \qquad \sigma_-=\sigma(0).
\end{equation}

The equilibria of the slow-fast system \eqref{system-slow} for $\varepsilon>0$ satisfy $\varphi_1=\varphi_2=0$ and $f(\varphi_0,0,0,\varepsilon)=0$. By \eqref{eq:f-def}, $f(\varphi_0,0,0,\varepsilon)=0$ is equivalent to
\[
  \sigma(\varepsilon)\varphi_0\left[\varphi_0\left(\frac{\sigma(\varepsilon)^2}{\gamma-1} + \frac{\sigma(\varepsilon)^2}{2}\right) + \frac{\sigma(\varepsilon)^2 v_- - \gamma p_-}{\varepsilon(\gamma-1)}\right]= 0,
\]
which by \eqref{eq:algebraic-cancellation} becomes
\[
    \sigma(\varepsilon)^3\Bigl(\frac{1}{\gamma-1}+\frac{1}{2}\Bigr)\varphi_0(\varphi_0-1)=0.
\]
Hence the equilibria with $\varphi_0\in K$ are $(0,0,0)$ and $(1,0,0)$ for every $\varepsilon\in(0,\varepsilon_2)$. Both are independent of $\varepsilon$.

\subsubsection{Verification of \textnormal{(H3)}}

At $\varepsilon=0$, the enlarged system \eqref{system-enlarged} gives $\varphi_2=0$ and $f(\varphi_0,\varphi_1,0,0)=0$. By \eqref{eq:f-def} and \eqref{eq:g-limit}, $f(\varphi_0,\varphi_1,0,0)=0$ reads
\begin{align*}
    0 &= \frac{R v_-^5}{\kappa\alpha} \left[\frac{\sigma_-^3(\gamma+1)}{2(\gamma-1)}(\varphi_0 - \varphi_0^2) - \frac{\mu\sigma_-^2}{\gamma-1}\varphi_1\right]+ \frac{(p_- - \sigma_-^2 v_-)v_-^4}{\kappa}\varphi_1.
\end{align*}
Hence the critical manifold is
\[
    \hat M_0:=\bigl\{(\varphi_0,\varphi_1,\varphi_2,q):(\varphi_1,\varphi_2)=h^0(\varphi_0,q),\ (\varphi_0,q)\in\hat K\bigr\},
\]
where
\begin{equation}\label{eq:h0-def}
    h^0(\varphi_0,q):=\bigl(C_*(q)(\varphi_0-\varphi_0^2),\,0\bigr)
\end{equation}
and, as in Theorem~\ref{thm:main},
\begin{equation}\label{eq:C1-def}
    C_* = \frac{R\gamma(\gamma+1)\sigma_-}{2(\mu R\gamma + \alpha(\gamma-1)^2)}.
\end{equation}
This is \textnormal{(H3)} with $Y=\hat K$ and $M_0=\hat M_0$. In particular, $\hat M_0$ is compact.

\subsubsection{Verification of \textnormal{(H2)}}
Since \textnormal{(H3)} holds, normal hyperbolicity in \textnormal{(H2)} means that the Jacobian $J$ of $(\varphi_2,f)$ with respect to $(\varphi_1,\varphi_2)$ at $\varepsilon=0$ has no eigenvalue on the imaginary axis on $\hat M_0$ (see Section~\ref{subsec:general-framework}). By \eqref{eq:f-def} and \eqref{eq:g-limit},
\begin{equation*} 
    J = \begin{pmatrix}
        0 & 1 \\
        -\dfrac{(\mu R\gamma + \alpha(\gamma-1)^2)p_- v_-^4}{\kappa \alpha (\gamma-1)} & -\dfrac{(R\kappa + \mu \alpha (\gamma-1) v_-^3) \sigma_- v_-}{\kappa \alpha (\gamma-1)}
    \end{pmatrix},
\end{equation*}
which is independent of $\varphi_0$, and
\begin{equation*} 
    \operatorname{tr}J = -\frac{(R\kappa+\mu\alpha(\gamma-1)v_-^3)\sigma_- v_-}{\kappa\alpha(\gamma-1)} < 0, \qquad
    \det J = \frac{(\mu R\gamma+\alpha(\gamma-1)^2)p_- v_-^4}{\kappa\alpha(\gamma-1)} > 0.
\end{equation*}
Hence both eigenvalues of $J$ have negative real parts, which is \textnormal{(H2)}.

\subsection{Slow Manifold and the Profile} \label{sec:construction}

\subsubsection{Slow Manifold and Endpoint Equilibria}\label{subsubsec:slow}

We reduce the slow-fast system \eqref{system-slow} on the slow manifold to the scalar equation \eqref{ODE2}. Let $\varepsilon_1$ and $h^\varepsilon$ be given by Proposition~\ref{prop:fenichel} for the enlarged system \eqref{system-enlarged}, and set $\varepsilon_3:=\tfrac12\min\{\varepsilon_1,\varepsilon_2\}$ with $\varepsilon_2$ from Section~\ref{subsubsec:H1}. By \eqref{eq:fenichel-expansion} and \eqref{eq:h0-def}, for $\varepsilon\in[0,\varepsilon_3]$,
\[
    h^\varepsilon=\bigl(C_*(\varphi_0-\varphi_0^2),\,0\bigr)+\varepsilon(h_1,h_2)\quad\text{on }\hat K,
    \qquad (h_1,h_2):=s.
\]
The functions $h_1$ and $h_2$ are smooth on $\hat K\times[0,\varepsilon_3]$, and
\begin{equation}\label{eq:h1-bound}
    |h_1|\le C_4 \quad\text{on } \hat K\times[0,\varepsilon_3].
\end{equation}

For $\varepsilon\in(0,\varepsilon_3]$, let $\hat M_\varepsilon$ be the three-dimensional manifold given by Proposition~\ref{prop:fenichel} for the enlarged system \eqref{system-enlarged}. It is locally invariant under \eqref{system-enlarged} and contains
\[
    \bigl\{(\varphi_0,\varphi_1,\varphi_2,q):(\varphi_1,\varphi_2)=h^\varepsilon(\varphi_0,q),\ (\varphi_0,q)\in\hat K\bigr\}.
\]
For $q\in\Omega$, set
\[
    M_\varepsilon:=\bigl\{(\varphi_0,\varphi_1,\varphi_2):(\varphi_0,\varphi_1,\varphi_2,q)\in\hat M_\varepsilon\bigr\}.
\]
We call $M_\varepsilon$ the slow manifold. It is a one-dimensional manifold, locally invariant under the slow-fast system \eqref{system-slow} since $\dot q=0$. It contains the graph
\begin{equation}\label{eq:M-eps}
    \big\{(\varphi_0, \varphi_1, \varphi_2): \varphi_0 \in \operatorname{int}K,\ \varphi_1 = C_*(\varphi_0 - \varphi_0^2) + \varepsilon h_1(\varphi_0, \varepsilon),\ \varphi_2 = \varepsilon h_2(\varphi_0, \varepsilon)\big\},
\end{equation}
which is one-dimensional and hence open in $M_\varepsilon$. On this graph, the equation $\dot\varphi_0=\varphi_1$ reads
\begin{equation}\label{ODE2}
    \dot{\varphi}_0 = F_\varepsilon(\varphi_0):=C_*(\varphi_0 - \varphi_0^2) + \varepsilon h_1(\varphi_0, \varepsilon),
    \qquad \varphi_0\in K.
\end{equation}
A prime on $F_\varepsilon$ denotes $d/d\varphi_0$. Since $h_1$ is smooth on $\hat K\times[0,\varepsilon_3]$, $\|F_\varepsilon\|_{C^5(K)}\le C$.

\begin{lemma}\label{lem:invariance}
Let $\varepsilon\in(0,\varepsilon_3]$ and $\varphi_0\in\operatorname{int}K$. Then
\begin{align}
    h_2(\varphi_0,\varepsilon)&=F_\varepsilon'(\varphi_0)\,F_\varepsilon(\varphi_0),\label{eq:inv-1}\\
    \varepsilon^2\,\partial_{\varphi_0}h_2(\varphi_0,\varepsilon)\,F_\varepsilon(\varphi_0)&=f\bigl(\varphi_0,\,F_\varepsilon(\varphi_0),\,\varepsilon h_2(\varphi_0,\varepsilon),\,\varepsilon\bigr).\label{eq:inv-2}
\end{align}
\end{lemma}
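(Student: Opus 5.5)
We prove both identities from the local invariance of the slow manifold: a trajectory of the slow-fast system \eqref{system-slow} that starts on the graph \eqref{eq:M-eps} stays on it for short times. We then differentiate the graph relations along that trajectory and compare with \eqref{system-slow}.

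\textbf{Step 1: a trajectory on the graph.} Fix $\varepsilon\in(0,\varepsilon_3]$ and $\varphi_0^*\in\operatorname{int}K$. Let $P$ be the point of the graph \eqref{eq:M-eps} above $\varphi_0^*$. Let $\Phi(z)=(\varphi_0,\varphi_1,\varphi_2)(z)$ be the solution of \eqref{system-slow} with $\Phi(0)=P$. The vector field is smooth near $P$, because $v_-+\varepsilon\varphi_0>0$ there. Hence $\Phi$ exists on some interval $(-T,T)$ and, for $T$ small, stays in the open set $V$ of Definition~\ref{def:loc-inv}. Since $M_\varepsilon$ is locally invariant, $\Phi(z)\in M_\varepsilon$ for $|z|<T$.

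The graph is open in $M_\varepsilon$ and $\varphi_0^*$ is an interior point of $K$. By continuity, after shrinking $T$, $\Phi(z)$ lies in the graph itself. So for $|z|<T$,
\[
\varphi_1(z)=F_\varepsilon(\varphi_0(z)),\qquad \varphi_2(z)=\varepsilon h_2(\varphi_0(z),\varepsilon).
\]

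\textbf{Step 2: derive the identities.} The function $\varphi_0\mapsto h_2(\varphi_0,\varepsilon)$ is $C^5$, so we may differentiate in $z$ using the chain rule and $\dot\varphi_0=\varphi_1=F_\varepsilon(\varphi_0)$.

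From the first relation, $\dot\varphi_1=F_\varepsilon'(\varphi_0)F_\varepsilon(\varphi_0)$. The second equation of \eqref{system-slow} gives $\varepsilon\dot\varphi_1=\varphi_2=\varepsilon h_2$. Dividing by $\varepsilon>0$ yields \eqref{eq:inv-1} at $z=0$, that is, at $\varphi_0^*$.

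From the second relation, $\dot\varphi_2=\varepsilon\,\partial_{\varphi_0}h_2\,F_\varepsilon$. Multiplying by $\varepsilon$ and using the third equation of \eqref{system-slow} gives
\[
\varepsilon^2\,\partial_{\varphi_0}h_2\,F_\varepsilon=f(\varphi_0,\varphi_1,\varphi_2,\varepsilon).
\]
Substituting the graph relations for $\varphi_1$ and $\varphi_2$ gives \eqref{eq:inv-2}.

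\textbf{Step 3: the case of a fixed point.} If $F_\varepsilon(\varphi_0^*)=0$, the argument still works. The trajectory may then be constant, but both sides of each identity are evaluated at $z=0$, so nothing changes. For \eqref{eq:inv-1}, both sides vanish.

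\textbf{Main obstacle.} The delicate step is Step 1: justifying that the trajectory lies in the graph portion of $M_\varepsilon$, not merely in $M_\varepsilon$. Proposition~\ref{prop:fenichel} only says the graph is contained in $M_\varepsilon$. We use that the graph is an open subset of the one-dimensional manifold $M_\varepsilon$, together with continuity of $\Phi$ in $z$. A second point to check is that local invariance for the enlarged system \eqref{system-enlarged} passes to the $q$-slice. This holds because $\dot q=0$, so trajectories of \eqref{system-enlarged} keep $q$ fixed.
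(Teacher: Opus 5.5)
Your proposal is correct and follows the paper's proof essentially verbatim: you start the flow of \eqref{system-slow} at the graph point, use local invariance of $M_\varepsilon$ together with openness of the graph \eqref{eq:M-eps} in $M_\varepsilon$ to keep the trajectory on the graph for short times, and differentiate the two graph relations at $z=0$. The differences are cosmetic: you use a two-sided time interval (justified by the backward-time clause of Definition~\ref{def:loc-inv}) where the paper takes a one-sided derivative from the right, and your Step~3 on fixed points is unnecessary but harmless.
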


\begin{proof}
Let $\psi=(\psi_0,\psi_1,\psi_2)$ be the solution of the slow-fast system \eqref{system-slow} with
\[
    \psi(0)=\bigl(\varphi_0,\,F_\varepsilon(\varphi_0),\,\varepsilon h_2(\varphi_0,\varepsilon)\bigr).
\]
Since $\varphi_0\in\operatorname{int}K$, $\psi(0)$ lies on the graph \eqref{eq:M-eps}, and hence on $M_\varepsilon$. Let $V\supset M_\varepsilon$ be an open set as in Definition~\ref{def:loc-inv}. By continuity, there is $\delta_1>0$ with $\psi(z)\in V$ for $z\in[0,\delta_1]$, so $\psi(z)\in M_\varepsilon$ for $z\in[0,\delta_1]$ by the choice of $V$. Since the graph \eqref{eq:M-eps} is open in $M_\varepsilon$, there is $\delta\in(0,\delta_1]$ such that $\psi(z)$ lies on this graph for $z\in[0,\delta]$, that is,
\[
    \psi_1(z)=F_\varepsilon(\psi_0(z)),
    \qquad
    \psi_2(z)=\varepsilon h_2(\psi_0(z),\varepsilon),
    \qquad z\in[0,\delta].
\]
We differentiate these identities at $z=0$ from the right. Since $\dot\psi_0(0)=\psi_1(0)=F_\varepsilon(\varphi_0)$, the equations for $\dot\varphi_1$ and $\dot\varphi_2$ in \eqref{system-slow} give
\[
    F_\varepsilon'(\varphi_0)F_\varepsilon(\varphi_0)=h_2(\varphi_0,\varepsilon),
    \qquad
    \varepsilon^2\partial_{\varphi_0}h_2(\varphi_0,\varepsilon)F_\varepsilon(\varphi_0)
    =f\bigl(\varphi_0,F_\varepsilon(\varphi_0),\varepsilon h_2(\varphi_0,\varepsilon),\varepsilon\bigr),
\]
which are \eqref{eq:inv-1} and \eqref{eq:inv-2}.
\end{proof}

Set $\eta:=\tfrac14$, so that $[-\eta,1+\eta]\subset\operatorname{int}K$, and let $\varepsilon_4\in(0,\varepsilon_3]$ satisfy $C_4\varepsilon_4<\eta(1-\eta)\inf_\Omega C_*$. Then the definition of $F_\varepsilon$ in \eqref{ODE2} and the bound \eqref{eq:h1-bound} give
\begin{equation}\label{eq:F-close}
    |F_\varepsilon(\varphi_0)-C_*\varphi_0(1-\varphi_0)|<C_*\eta(1-\eta),
    \qquad \varphi_0\in K,\ \varepsilon\in(0,\varepsilon_4].
\end{equation}

\begin{lemma}\label{lem:endpoint}
Let $\varepsilon\in(0,\varepsilon_4]$. Then
\begin{align}
    &F_\varepsilon(0)=F_\varepsilon(1)=0,\label{eq:F-zeros}\\
    &h_2(0,\varepsilon)=h_2(1,\varepsilon)=0,\label{eq:h2-zeros}\\
    &F_\varepsilon(\varphi_0)>0 \qquad \text{for all } \varphi_0\in(0,1).\label{eq:F-positive}
\end{align}
In particular, $(0,0,0)$ and $(1,0,0)$ are equilibria of the slow-fast system \eqref{system-slow} lying on $M_\varepsilon$.
\end{lemma}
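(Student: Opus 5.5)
The plan is to combine the invariance identities of Lemma~\ref{lem:invariance} with the classification of equilibria in Section~\ref{subsubsec:H1}: every zero of $F_\varepsilon$ in $\operatorname{int}K$ produces an equilibrium of \eqref{system-slow}. The closeness estimate \eqref{eq:F-close} then forces zeros near $0$ and near $1$, and by the classification these zeros must be exactly $0$ and $1$. Throughout, fix $\varepsilon\in(0,\varepsilon_4]$ and $q\in\Omega$.

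\emph{Step 1: zeros of $F_\varepsilon$ are equilibria.} Suppose $a\in\operatorname{int}K$ and $F_\varepsilon(a)=0$. By \eqref{eq:inv-1}, $h_2(a,\varepsilon)=F_\varepsilon'(a)F_\varepsilon(a)=0$. The left-hand side of \eqref{eq:inv-2} carries the factor $F_\varepsilon(a)=0$, so \eqref{eq:inv-2} gives
\[
f\bigl(a,F_\varepsilon(a),\varepsilon h_2(a,\varepsilon),\varepsilon\bigr)=f(a,0,0,\varepsilon)=0 .
\]
Thus $(a,0,0)$ is an equilibrium of \eqref{system-slow}. Section~\ref{subsubsec:H1} shows that the only equilibria with $\varphi_0\in K$ are those with $\varphi_0\in\{0,1\}$, since $\varepsilon<\varepsilon_2$ there. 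Hence $a\in\{0,1\}$, so $F_\varepsilon$ has no zeros in $\operatorname{int}K\setminus\{0,1\}$.

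\emph{Step 2: existence of the zeros, by the intermediate value theorem.} Recall $\eta=\tfrac14$. At $\varphi_0=\pm\eta$ and $\varphi_0=1\pm\eta$, the values of $C_*\varphi_0(1-\varphi_0)$ are $C_*\eta(1-\eta)$ at $\eta$ and $1-\eta$, and $-C_*\eta(1+\eta)$ at $-\eta$ and $1+\eta$. The first value is $\ge C_*\eta(1-\eta)$ and the second is $\le -C_*\eta(1-\eta)$. By \eqref{eq:F-close} the perturbation is strictly smaller than $C_*\eta(1-\eta)$, so
\[
F_\varepsilon(-\eta)<0,\qquad F_\varepsilon(\eta)>0,\qquad F_\varepsilon(1-\eta)>0,\qquad F_\varepsilon(1+\eta)<0 .
\]
Since $F_\varepsilon$ is continuous, it has a zero in $(-\eta,\eta)$ and a zero in $(1-\eta,1+\eta)$. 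By Step 1 these zeros are $0$ and $1$ respectively, which proves \eqref{eq:F-zeros}. Then \eqref{eq:inv-1} at $\varphi_0=0$ and $\varphi_0=1$ gives \eqref{eq:h2-zeros}.

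\emph{Step 3: positivity.} By Step 1, $F_\varepsilon$ has no zero in $(0,1)$. Since $F_\varepsilon$ is continuous and $F_\varepsilon(\eta)>0$ with $\eta\in(0,1)$, we get $F_\varepsilon>0$ on $(0,1)$, which is \eqref{eq:F-positive}.

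For the final assertion, the points $\bigl(\varphi_0,F_\varepsilon(\varphi_0),\varepsilon h_2(\varphi_0,\varepsilon)\bigr)$ with $\varphi_0=0,1$ lie on the graph \eqref{eq:M-eps}, hence on $M_\varepsilon$. By \eqref{eq:F-zeros} and \eqref{eq:h2-zeros} these points are $(0,0,0)$ and $(1,0,0)$, which are equilibria by Section~\ref{subsubsec:H1}.

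The only delicate point is Step 1. It relies on the invariance identity \eqref{eq:inv-2}, which uses local invariance of $M_\varepsilon$ under the flow, to convert a zero of the reduced vector field into a genuine equilibrium of the full system. I expect no further obstacle, since the equilibrium classification and the uniform closeness \eqref{eq:F-close} are already available.
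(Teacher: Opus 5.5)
Your proposal is correct and follows essentially the same argument as the paper: zeros of $F_\varepsilon$ in $\operatorname{int}K$ become equilibria via \eqref{eq:inv-1}--\eqref{eq:inv-2} and therefore lie in $\{0,1\}$, and the closeness bound \eqref{eq:F-close} with the intermediate value theorem then places the zeros at $0$ and $1$. Your Step~3 deduces positivity on $(0,1)$ from the absence of zeros there, connectedness, and $F_\varepsilon(\eta)>0$, which is a slightly more streamlined version of the paper's two-sided intermediate value argument.
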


\begin{proof}
\textit{Step 1: $\{\varphi_0\in\operatorname{int}K:F_\varepsilon(\varphi_0)=0\}\subset\{0,1\}$.} If $F_\varepsilon(\varphi_0)=0$ for some $\varphi_0\in\operatorname{int}K$, then $h_2(\varphi_0,\varepsilon)=0$ by \eqref{eq:inv-1} and $f(\varphi_0,0,0,\varepsilon)=0$ by \eqref{eq:inv-2}. Thus $(\varphi_0,0,0)$ is an equilibrium of \eqref{system-slow}, and $\varphi_0\in\{0,1\}$ by Section~\ref{subsubsec:H1}.

\textit{Step 2: $F_\varepsilon(0)=F_\varepsilon(1)=0$.} Since $C_*\varphi_0(1-\varphi_0)\ge C_*\eta(1-\eta)$ on $[\eta,1-\eta]$, \eqref{eq:F-close} gives $F_\varepsilon>0$ on $[\eta,1-\eta]$, $F_\varepsilon(-\eta)<0$, and $F_\varepsilon(1+\eta)<0$. By the intermediate value theorem, $F_\varepsilon$ has a zero in $(-\eta,\eta)$ and a zero in $(1-\eta,1+\eta)$. Both intervals lie in $\operatorname{int}K$, and since $\eta<\frac12$, Step~1 identifies these zeros as $0$ and $1$. This proves \eqref{eq:F-zeros}, and \eqref{eq:inv-1} gives \eqref{eq:h2-zeros}.

\textit{Step 3: $F_\varepsilon>0$ on $(0,1)$.} If $F_\varepsilon(\varphi_0)\le0$ for some $\varphi_0\in(0,\eta)$, then, since $F_\varepsilon(\eta)>0$, the intermediate value theorem yields a zero of $F_\varepsilon$ in $(0,1)$, which contradicts Step~1. The same argument applies on $(1-\eta,1)$, and $F_\varepsilon>0$ on $[\eta,1-\eta]$ by Step~2. This proves \eqref{eq:F-positive}. Finally, \eqref{eq:F-zeros} and \eqref{eq:h2-zeros} show that $(0,0,0)$ and $(1,0,0)$ lie on the graph \eqref{eq:M-eps}, hence on $M_\varepsilon$.
\end{proof}

We prove Theorem~\ref{thm:main} with
\[
    \varepsilon_0:=\min\{\varepsilon_3,\varepsilon_4,\varepsilon_5,\varepsilon_6,\varepsilon_7\}.
\]
Later, $\varepsilon_5$ is fixed for the positivity of $\tilde\theta$ in Section~\ref{subsubsec:reconstruction}, $\varepsilon_6$ for the sign of $\tilde\theta_\xi$ in Section~\ref{subsubsec:pointwise}, and $\varepsilon_7$ for the exponential decay in Section~\ref{subsubsec:decay}. The constants $C_4,\dots,C_7$ depend only on $\Omega$ and $\gamma$, $R$, $\mu$, $\alpha$, $\kappa$, and so does $\varepsilon_0$.

\subsubsection{Construction of the Monotone Profile}\label{subsubsec:reconstruction}

For $\varepsilon\in(0,\varepsilon_0)$, we construct a solution $(\tilde v,\tilde u,\tilde\theta):\mathbb R\to\mathbb R_+\times\mathbb R\times\mathbb R_+$ of \eqref{eq:shock-ode}--\eqref{eq:shock-ode-bc} with $\tilde v_\xi>0$, $\tilde u_\xi<0$, and $\tilde v(0)=\frac{v_-+v_+}{2}$. By \eqref{eq:F-zeros} and \eqref{eq:F-positive}, the scalar initial value problem
\[
    \dot\varphi_0=F_\varepsilon(\varphi_0),\qquad \varphi_0(0)=\tfrac12,
\]
has a unique global solution $\varphi_0:\mathbb R\to(0,1)$, which is strictly increasing and satisfies
\begin{equation}\label{standingend}
    \lim_{z\to-\infty}\varphi_0(z)=0,\qquad \lim_{z\to+\infty}\varphi_0(z)=1.
\end{equation}
Set
\[
    \varphi(z):=\bigl(\varphi_0(z),\,F_\varepsilon(\varphi_0(z)),\,\varepsilon h_2(\varphi_0(z),\varepsilon)\bigr).
\]
Since $\varphi_0(z)\in(0,1)\subset\operatorname{int}K$, the invariance identities \eqref{eq:inv-1} and \eqref{eq:inv-2} hold at $\varphi_0=\varphi_0(z)$, and
\[
    \dot\varphi_0=\varphi_1,\qquad
    \varepsilon\dot\varphi_1=\varepsilon F_\varepsilon'(\varphi_0)F_\varepsilon(\varphi_0)=\varphi_2,\qquad
    \varepsilon\dot\varphi_2=\varepsilon^2\partial_{\varphi_0}h_2(\varphi_0,\varepsilon)F_\varepsilon(\varphi_0)=f(\varphi_0,\varphi_1,\varphi_2,\varepsilon).
\]
Hence $\varphi$ solves the slow-fast system \eqref{system-slow} on $\mathbb R$, and it connects $(0,0,0)$ to $(1,0,0)$ by the limits \eqref{standingend} and Lemma~\ref{lem:endpoint}.

Set
\[
    \tilde v(\xi):=v_-+\varepsilon\varphi_0(\varepsilon\xi),
\]
and define
\begin{equation}\label{eq:u,th-construction}
\begin{aligned}
    \tilde u&:=u_--\sigma(\tilde v-v_-),\\
    \tilde\theta&:=\frac{\tilde v}{R}\left[p_--\sigma^2(\tilde v-v_-)-\mu\sigma\frac{\tilde v'}{\tilde v}
        +\kappa\left(-\frac{\tilde v''}{\tilde v^5}+\frac{5(\tilde v')^2}{2\tilde v^6}\right)\right].
\end{aligned}
\end{equation}
Since $\tilde v_\xi=\varepsilon^2F_\varepsilon(\varphi_0)$ and $\tilde v_{\xi\xi}=\varepsilon^3F_\varepsilon'(\varphi_0)F_\varepsilon(\varphi_0)$, \eqref{eq:F-positive} and $\|F_\varepsilon\|_{C^5(K)}\le C$ give
\begin{equation}\label{eq:construction-v-bounds}
    0<\tilde v_\xi\le C\varepsilon^2,\qquad |\tilde v_{\xi\xi}|\le C\varepsilon|\tilde v_\xi|.
\end{equation}
Hence $\tilde u_\xi=-\sigma\tilde v_\xi<0$. Since $\varphi_0$ takes values in $(0,1)$ and $\varphi_0(0)=\frac12$, we also have $v_-<\tilde v<v_+$ and $\tilde v(0)=\frac{v_-+v_+}{2}$. For $\varepsilon\in(0,\varepsilon_4]$, the formula for $\tilde\theta$ in \eqref{eq:u,th-construction} and the bounds \eqref{eq:construction-v-bounds} give
\[
    |\tilde\theta-\theta_-|\le C\bigl(|\tilde v-v_-|+|\tilde v_\xi|+|\tilde v_{\xi\xi}|+|\tilde v_\xi|^2\bigr)\le C_5\varepsilon.
\]
Let $\varepsilon_5\in(0,\varepsilon_4]$ satisfy $C_5\varepsilon_5\le\frac12\inf_\Omega\theta_-$. Then $\tilde\theta\ge\theta_-/2>0$ for $\varepsilon\in(0,\varepsilon_5]$, so $(\tilde v,\tilde u,\tilde\theta)$ takes values in $\mathbb R_+\times\mathbb R\times\mathbb R_+$.

By construction, $\tilde u$ and $\tilde\theta$ satisfy the integrated mass and momentum equations \eqref{eq:int1} and \eqref{eq:int2}, under which the integrated energy equation \eqref{eq:int3} is equivalent to the third-order equation \eqref{eq:ODE-v}. Since $\varphi$ solves the slow-fast system \eqref{system-slow}, \eqref{eq:ODE-v} holds for $\tilde v$. Hence $(\tilde v,\tilde u,\tilde\theta)$ satisfies the integrated system \eqref{eq:int-system}, and therefore the profile equations \eqref{eq:shock-ode}. Since $\varphi$ connects $(0,0,0)$ to $(1,0,0)$, we have $\tilde v\to v_\pm$, $\tilde v'\to0$, and $\tilde v''\to0$ as $\xi\to\pm\infty$, and the Rankine--Hugoniot condition \eqref{eq:RH} gives $(\tilde u,\tilde\theta)\to(u_\pm,\theta_\pm)$. Hence $(\tilde v,\tilde u,\tilde\theta)$ solves \eqref{eq:shock-ode}--\eqref{eq:shock-ode-bc}. The sign of $\tilde\theta_\xi$ is proved in Section~\ref{subsubsec:pointwise}.

\section{Uniqueness and Pointwise Estimates} \label{sec:uniqueness_estimates}

\setcounter{equation}{0}

\subsection{Proof of Uniqueness} \label{subsec:uniqueness}

We show that every monotone solution of \eqref{eq:shock-ode}--\eqref{eq:shock-ode-bc} is a translate of the profile constructed in Section~\ref{subsubsec:reconstruction}. As in \cite{BDD06,HKKL25,HK26,HKO26}, we introduce the auxiliary variable
\begin{equation*} 
    \tilde{w}(\xi) := -\sqrt{\kappa} \frac{\tilde{v}'(\xi)}{\tilde{v}(\xi)^{5/2}},
\end{equation*}
so that the total energy \eqref{eq:total_energy} takes the form $\tilde{E} = R\tilde{\theta}/(\gamma-1) + \tilde{w}^2/2 + \tilde{u}^2/2$. By the integrated system \eqref{eq:int-system}, the profile $(\tilde{v}, \tilde{w}, \tilde{\theta})$ satisfies the first-order system
\begin{equation} \label{ODE:vwth}
\begin{aligned}
    \tilde{v}' &= -\frac{\tilde{v}^{5/2}}{\sqrt{\kappa}}\tilde{w}, \\
    \tilde{w}' &= \frac{\tilde{v}^{5/2}}{\sqrt{\kappa}}\left( \left[\sigma^2 (\tilde{v} - v_-) + (\tilde{p} - p_-)\right] - \frac{\mu \sigma}{\sqrt{\kappa}}\tilde{v}^{3/2}\tilde{w} \right),\\
    \tilde{\theta}' &= -\frac{\sigma \tilde{v}}{\alpha} \left( \frac{R}{\gamma-1}(\tilde{\theta}-\theta_-) - \frac{\sigma^2}{2}(\tilde{v}-v_-)^2 - \frac{\tilde{w}^2}{2} + p_-(\tilde{v}-v_-) \right),
\end{aligned}
\end{equation}
subject to 
\begin{equation*} 
    \lim_{\xi \to \pm\infty} (\tilde{v}, \tilde{w}, \tilde{\theta})(\xi) = (v_\pm, 0, \theta_\pm).
\end{equation*}
The left end state $(v_-,0,\theta_-)$ is an equilibrium of \eqref{ODE:vwth}.

The Jacobian matrix of the first-order system \eqref{ODE:vwth} at the left end state $(v_-,0,\theta_-)$ is
\begin{equation}\label{eq:B-matrix}
B=
\begin{pmatrix}
    0 & -b_1 & 0\\
    b_2 & -b_3 & b_4\\
    -b_5 & 0 & -b_6
\end{pmatrix},
\end{equation}
where
\begin{equation*}
\begin{aligned}
b_1 &:= \frac{v_-^{5/2}}{\sqrt{\kappa}}, & b_2 &:= \frac{v_-^{5/2}}{\sqrt{\kappa}}\left(\sigma^2 - \frac{p_-}{v_-}\right), & b_3 &:= \frac{\sigma \mu v_-^4}{\kappa}, \\
b_4 &:= \frac{R v_-^{3/2}}{\sqrt{\kappa}}, & b_5 &:= \frac{\sigma v_- p_-}{\alpha}, & b_6 &:= \frac{\sigma R v_-}{\alpha(\gamma-1)}.
\end{aligned}
\end{equation*}
By the identity \eqref{eq:algebraic-cancellation},
\begin{equation*}
    \operatorname{tr}B = -b_3 - b_6 < 0, \qquad
    \det B = \frac{R \sigma v_-^5}{\alpha \kappa(\gamma-1)}\left( \gamma p_- - \sigma^2 v_- \right)
    = \frac{R \sigma^3 v_-^5(\gamma+1)}{2\alpha\kappa(\gamma-1)}\,\varepsilon > 0.
\end{equation*}

Since the eigenvalues of $B$ have a negative sum and a positive product, $B$ has one positive eigenvalue $\lambda_u$, and the other two eigenvalues have negative real parts. Let $X=(X_1,X_2,X_3)\ne0$ satisfy $BX=\lambda_uX$. By \eqref{eq:B-matrix},
\[
    -b_1X_2=\lambda_uX_1,\qquad
    b_2X_1-b_3X_2+b_4X_3=\lambda_uX_2,\qquad
    -b_5X_1-b_6X_3=\lambda_uX_3.
\]
If $X_1=0$, then $b_1X_2=0$ and $(\lambda_u+b_6)X_3=0$, so $X_2=X_3=0$ since $b_1,b_6,\lambda_u>0$. Hence $X_1\ne0$. By the unstable manifold theorem \cite[Theorems~9.4 and 9.5]{T12}, $(v_-,0,\theta_-)$ has a one-dimensional local unstable manifold $W^u_{\mathrm{loc}}$ tangent to $X$. Since $X_1\ne0$, $W^u_{\mathrm{loc}}\setminus\{(v_-,0,\theta_-)\}$ consists of two orbits of \eqref{ODE:vwth}, one in $\{v>v_-\}$ and one in $\{v<v_-\}$.

Let $(\bar v,\bar u,\bar\theta)$ be a monotone solution of \eqref{eq:shock-ode}--\eqref{eq:shock-ode-bc}, and set $\bar w:=-\sqrt{\kappa}\,\bar v'/\bar v^{5/2}$. Integrating the profile equations \eqref{eq:shock-ode} once, we obtain constants $m_1$, $m_2$, and $m_3$ such that
\begin{equation}\label{eq:flux-constants}
\begin{aligned}
    &-\sigma(\bar v-v_-)-(\bar u-u_-)=m_1,\\
    &-\sigma(\bar u-u_-)+(\bar p-p_-)
     =\mu\frac{\bar u'}{\bar v}+\kappa\left(-\frac{\bar v''}{\bar v^5}+\frac{5(\bar v')^2}{2\bar v^6}\right)+m_2,\\
    &-\sigma(\bar E-E_-)+(\bar p\bar u-p_-u_-)
     =\alpha\frac{\bar\theta'}{\bar v}+\mu\frac{\bar u\bar u'}{\bar v}
     +\kappa\bar u\left(-\frac{\bar v''}{\bar v^5}+\frac{5(\bar v')^2}{2\bar v^6}\right)
     +\kappa\frac{\bar v'\bar u'}{\bar v^5}+m_3,
\end{aligned}
\end{equation}
where $\bar p:=p(\bar v,\bar\theta)$ and $\bar E:=\frac{R}{\gamma-1}\bar\theta+\frac{\kappa}{2}\frac{(\bar v')^2}{\bar v^5}+\frac{\bar u^2}{2}$. We show that $m_1=m_2=m_3=0$.

\textit{Step 1: $m_1=0$.} Letting $\xi\to-\infty$ in $\eqref{eq:flux-constants}_1$ and using the far-field conditions \eqref{eq:shock-ode-bc}, we obtain $m_1=0$. Hence $\bar u-u_-=-\sigma(\bar v-v_-)$ and $\bar u'=-\sigma\bar v'$.

\textit{Step 2: $m_2=0$.} By Step~1, $\eqref{eq:flux-constants}_2$ becomes
\begin{equation}\label{eq:w-linear}
    \bar w'+\Lambda\bar w=\Phi,
    \qquad
    \Lambda:=\frac{\mu\sigma}{\kappa}\bar v^4,
    \qquad
    \Phi:=\frac{\bar v^{5/2}}{\sqrt{\kappa}}\Bigl[\sigma^2(\bar v-v_-)+\bar p-p_--m_2\Bigr].
\end{equation}
By the far-field conditions \eqref{eq:shock-ode-bc}, $\Phi/\Lambda\to\bar w_-$ as $\xi\to-\infty$, where $\bar w_-:=-\sqrt{\kappa}\,m_2/(\mu\sigma v_-^{3/2})$. We show that $\bar w\to\bar w_-$ as $\xi\to-\infty$ and that $\bar w_-=0$. The derivative $\bar v'$ is integrable on $\mathbb R$, since $\bar v$ is nondecreasing and bounded. As $\bar v\ge v_-$, the function $\bar w$ is integrable as well, so $\bar w(\xi_n)\to0$ for some sequence $\xi_n\to-\infty$. In terms of $\bar w-\bar w_-$, the linear equation \eqref{eq:w-linear} reads
\[
    (\bar w-\bar w_-)'+\Lambda(\bar w-\bar w_-)=\Phi-\Lambda\bar w_- .
\]
Duhamel's formula and the bound $\Lambda\ge\mu\sigma v_-^4/\kappa$ give, for $\xi_n\le\xi$,
\[
    |\bar w(\xi)-\bar w_-|
    \le e^{-\mu\sigma v_-^4(\xi-\xi_n)/\kappa}|\bar w(\xi_n)-\bar w_-|
    +\frac{\kappa}{\mu\sigma v_-^4}\sup_{\tau\le\xi}\bigl|\Phi(\tau)-\Lambda(\tau)\bar w_-\bigr|.
\]
The term $\Phi-\Lambda\bar w_-$ tends to $0$ as $\xi\to-\infty$. Letting $n\to\infty$ and then $\xi\to-\infty$, we obtain $\bar w\to\bar w_-$. Since $\bar w(\xi_n)\to0$, we conclude that $\bar w_-=0$. Hence $m_2=0$ and $\bar w\to0$ as $\xi\to-\infty$.

\textit{Step 3: $m_3=0$.} By Steps~1 and~2, $\eqref{eq:flux-constants}_2$ and $\eqref{eq:flux-constants}_3$ give
\[
    -\sigma\left(\frac{R}{\gamma-1}(\bar\theta-\theta_-)-\frac{\kappa(\bar v')^2}{2\bar v^5}-\frac{\sigma^2}{2}(\bar v-v_-)^2\right)
    -\sigma p_-(\bar v-v_-)
    =\alpha\frac{\bar\theta'}{\bar v}+m_3 .
\]
By the far-field conditions \eqref{eq:shock-ode-bc} and $\kappa(\bar v')^2/\bar v^5=\bar w^2\to0$, the left-hand side tends to $0$ as $\xi\to-\infty$, so $\bar\theta'\to-v_-m_3/\alpha$. Since $\bar\theta$ converges, this limit is $0$, and hence $m_3=0$.

With $m_1=m_2=m_3=0$, the triple $(\bar v,\bar u,\bar\theta)$ satisfies the integrated system \eqref{eq:int-system}, and $(\bar v,\bar w,\bar\theta)$ solves the first-order system \eqref{ODE:vwth}. Since $\bar u$ is determined by $\bar v$ through Step~1, it suffices to show that
\begin{equation}\label{eq:translate}
    (\bar v,\bar w,\bar\theta)(\xi)=(\tilde v,\tilde w,\tilde\theta)(\xi+a),\qquad \xi\in\mathbb R,
\end{equation}
for some $a\in\mathbb R$, where $(\tilde v,\tilde u,\tilde\theta)$ is the solution constructed in Section~\ref{subsubsec:reconstruction}. Both triples tend to $(v_-,0,\theta_-)$ as $\xi\to-\infty$.

We first show that $\bar v>v_-$ on $\mathbb R$. If $\bar v(\xi_0)=v_-$ for some $\xi_0\in\mathbb R$, then, since $\bar v$ is nondecreasing and $\bar v\ge v_-$,
\[
    \bar v\equiv v_-,\qquad \bar w\equiv0\qquad\text{on }(-\infty,\xi_0],
\]
and $\eqref{ODE:vwth}_2$ gives $\bar p\equiv p_-$, that is, $\bar\theta\equiv\theta_-$, on $(-\infty,\xi_0]$. Thus $(\bar v,\bar w,\bar\theta)(\xi_0)=(v_-,0,\theta_-)$, so $(\bar v,\bar w,\bar\theta)\equiv(v_-,0,\theta_-)$ by the uniqueness of solutions to \eqref{ODE:vwth}, which contradicts $\bar v(\xi)\to v_+>v_-$ as $\xi\to+\infty$.

Since $(v_-,0,\theta_-)$ is hyperbolic and $(\bar v,\bar w,\bar\theta)\to(v_-,0,\theta_-)$ as $\xi\to-\infty$, there is $\xi_1\in\mathbb R$ such that $(\bar v,\bar w,\bar\theta)(\xi)\in W^u_{\mathrm{loc}}$ for $\xi\le\xi_1$. Since $\bar v>v_-$ and $\tilde v>v_-$, the points $(\bar v,\bar w,\bar\theta)(\xi)$ for $\xi\le\xi_1$ and $(\tilde v,\tilde w,\tilde\theta)(\xi)$ for $\xi$ near $-\infty$ lie on the orbit of $W^u_{\mathrm{loc}}$ in $\{v>v_-\}$. Hence $(\bar v,\bar w,\bar\theta)(\xi_1)=(\tilde v,\tilde w,\tilde\theta)(\xi_1+a)$ for some $a\in\mathbb R$, and \eqref{eq:translate} follows from the uniqueness of solutions to \eqref{ODE:vwth}.

Finally, Step~1, \eqref{eq:translate}, and the mass relation \eqref{eq:mass-relation} give
\[
    \bar u(\xi)-u_-=-\sigma\bigl(\bar v(\xi)-v_-\bigr)=-\sigma\bigl(\tilde v(\xi+a)-v_-\bigr)=\tilde u(\xi+a)-u_-,
    \qquad \xi\in\mathbb R.
\]
Since $\tilde v$ is strictly increasing, the normalization $\tilde v(0)=\frac{v_-+v_+}{2}$ in Theorem~\ref{thm:main} fixes $a$, so the monotone profile is unique.

\subsection{Proof of Pointwise Estimates} \label{sec:estimates}

\subsubsection{Derivative Estimates}\label{subsubsec:higher}\label{subsubsec:pointwise}
We first prove
\begin{equation}\label{eq:higher-target}
    |\partial_\xi^k(\tilde v,\tilde u,\tilde\theta)|\le C\varepsilon^{k-1}|\tilde v_\xi|,
    \qquad k=2,3,4.
\end{equation}
Define functions $P_k$ of $\varphi_0\in K$ by
\[
    P_1:=1,\qquad P_{k+1}:=F_\varepsilon'P_k+F_\varepsilon P_k',\qquad k=1,\dots,5,
\]
where a prime denotes $d/d\varphi_0$, so that $P_2=F_\varepsilon'$ and $P_3=(F_\varepsilon')^2+F_\varepsilon F_\varepsilon''$. Since $\dot\varphi_0=F_\varepsilon(\varphi_0)$, induction on $k$ gives
\[
    \frac{d^k\varphi_0}{dz^k}=F_\varepsilon(\varphi_0)\,P_k(\varphi_0),\qquad k=1,\dots,6.
\]
Since $P_k$ is a polynomial in $F_\varepsilon,\dots,F_\varepsilon^{(k-1)}$ and $\|F_\varepsilon\|_{C^5(K)}\le C$, we have $|P_k|\le C$ on $K$ for $k=1,\dots,6$, and hence, with $z=\varepsilon\xi$,
\begin{equation} \label{eq:v_higher_bounds}
    |\partial_\xi^k\tilde v|
    =\varepsilon^{k+1}\Bigl|\frac{d^k\varphi_0}{dz^k}\Bigr|
    \le C\varepsilon^{k+1}|F_\varepsilon(\varphi_0)|
    =C\varepsilon^{k-1}|\tilde v_\xi|,
    \qquad k=1,\dots,6 .
\end{equation}
By the mass relation \eqref{eq:mass-relation}, $\partial_\xi^k\tilde u=-\sigma\,\partial_\xi^k\tilde v$ for every $k\ge1$, so \eqref{eq:higher-target} holds for $\tilde v$ and $\tilde u$.

We next prove the estimate \eqref{eq:sigma-estimate} of $\sigma$, the relations \eqref{eq:derivative_relations} and the comparability \eqref{eq:derivative-comparability} of the derivatives, and $\tilde\theta_\xi<0$. The estimate \eqref{eq:sigma-estimate} is the bound \eqref{eq:sigma-small-amplitude} in Lemma~\ref{lem:RH-parametrization}. For the velocity, the mass relation \eqref{eq:mass-relation} gives
\begin{equation} \label{eq:u-prime-est}
    |\tilde{u}' + \sigma_- \tilde{v}'| = |-\sigma \tilde{v}' + \sigma_- \tilde{v}'| = |\sigma_- - \sigma| |\tilde{v}'| \le C\varepsilon |\tilde{v}'|.
\end{equation}
For the temperature, differentiating the formula \eqref{eq:theta_expression} for $\tilde\theta$ yields
\begin{equation} \label{eq:theta-prime}
    \tilde{\theta}' = \frac{1}{R} \left( p_- - 2\sigma^2 \tilde{v} + \sigma^2 v_- \right)\tilde{v}'
    -\frac{\mu\sigma}{R}\tilde v''
    +\frac{\kappa}{R}\left(-\frac{\tilde v'''}{\tilde v^4}+\frac{9\tilde v'\tilde v''}{\tilde v^5}-\frac{25(\tilde v')^3}{2\tilde v^6}\right).
\end{equation}
By \eqref{eq:v_higher_bounds} and the bound $\tilde v'\le C\varepsilon^2$ in \eqref{eq:construction-v-bounds}, the terms with $\tilde v''$, $\tilde v'''$, or $(\tilde v')^3$ are bounded by $C\varepsilon|\tilde v'|$.
Since $\tilde{v} - v_- = O(\varepsilon)$ and $\sigma^2 = \frac{\gamma p_-}{v_-} + O(\varepsilon)$ by \eqref{eq:sigma-small-amplitude}, the leading coefficient in \eqref{eq:theta-prime} satisfies
\begin{equation*}
    p_- - 2\sigma^2 \tilde{v} + \sigma^2 v_- = p_- - \sigma^2 v_- + O(\varepsilon) = (1-\gamma)p_- + O(\varepsilon).
\end{equation*}
Hence, for $\varepsilon\in(0,\varepsilon_4]$, \eqref{eq:theta-prime} gives
\begin{equation} \label{eq:theta-prime-est}
    \left|\tilde{\theta}' + \frac{(\gamma-1)p_-}{R} \tilde{v}'\right| \le C_6\varepsilon |\tilde{v}'|.
\end{equation}
The bounds \eqref{eq:u-prime-est} and \eqref{eq:theta-prime-est} are the relations \eqref{eq:derivative_relations}. Let $\varepsilon_6\in(0,\varepsilon_4]$ satisfy $C_6\varepsilon_6\le\tfrac{\gamma-1}{2R}\inf_\Omega p_-$. Then, for $\varepsilon\in(0,\varepsilon_6]$, \eqref{eq:theta-prime-est} yields $\tilde\theta'\le-\frac{(\gamma-1)p_-}{2R}\tilde v'<0$ and $|\tilde\theta'|\sim|\tilde v'|$. Since $|\tilde u'|=\sigma|\tilde v'|$ by the mass relation \eqref{eq:mass-relation} and $\sqrt{2/3}\,\sigma_-<\sigma<\sigma_-$ by the formula \eqref{eq:sigma-explicit} for $\sigma$, we also have $|\tilde u'|\sim|\tilde v'|$, which proves \eqref{eq:derivative-comparability}.

Finally, differentiating the formula \eqref{eq:theta-prime} for $\tilde\theta'$ gives
\[
\begin{aligned}
    \tilde\theta''={}&\frac1R\bigl(p_--2\sigma^2\tilde v+\sigma^2v_-\bigr)\tilde v''-\frac{2\sigma^2}{R}(\tilde v')^2
    -\frac{\mu\sigma}{R}\tilde v'''\\
    &+\frac{\kappa}{R}\left(-\frac{\tilde v''''}{\tilde v^4}+\frac{13\tilde v'\tilde v'''}{\tilde v^5}
    +\frac{9(\tilde v'')^2}{\tilde v^5}-\frac{165(\tilde v')^2\tilde v''}{2\tilde v^6}+\frac{75(\tilde v')^4}{\tilde v^7}\right).
\end{aligned}
\]
By \eqref{eq:v_higher_bounds} and the bound $\tilde v'\le C\varepsilon^2$ in \eqref{eq:construction-v-bounds}, each term is bounded by $C\varepsilon|\tilde v'|$. The same computation for $\partial_\xi^3\tilde\theta$ and $\partial_\xi^4\tilde\theta$ gives $|\partial_\xi^k\tilde\theta|\le C\varepsilon^{k-1}|\tilde v'|$ for $k=3,4$. This completes the proof of \eqref{eq:higher-target}.

\subsubsection{Factorization and Decay}\label{subsubsec:decay}

We prove the two-sided bound \eqref{eq:logistic-signed} and the approximation \eqref{eq:logistic-precise} of $\tilde v_\xi$, and the exponential decay in \eqref{eq:shock_estimates}. By Lemma~\ref{lem:endpoint} and the definition of $F_\varepsilon$ in \eqref{ODE2}, $h_1(0,\varepsilon)=h_1(1,\varepsilon)=0$ for $\varepsilon\in(0,\varepsilon_4]$. Define
\[
    \tilde h_1(\varphi_0,\varepsilon):=\frac{h_1(\varphi_0,\varepsilon)}{\varphi_0(1-\varphi_0)},\qquad \varphi_0\in(0,1).
\]
Since $1-\varphi_0\ge\frac12$ for $\varphi_0\le\frac12$ and $\varphi_0\ge\frac12$ for $\varphi_0\ge\frac12$, the mean value theorem applied at $\varphi_0=0$ and at $\varphi_0=1$ gives
\begin{equation}\label{eq:h1-tilde-bound}
    |\tilde h_1|\le2\sup_{\hat K\times[0,\varepsilon_3]}|\partial_{\varphi_0}h_1|\le C_7
    \quad\text{on } (0,1)\times(0,\varepsilon_4].
\end{equation}
In terms of $\tilde v$, the scalar equation \eqref{ODE2} reads
\begin{equation}\label{eq:v_differential_eq}
    \tilde v'=\bigl[C_*+\varepsilon\tilde h_1\bigr](\tilde v-v_-)(v_+-\tilde v).
\end{equation}
Hence, by the bound \eqref{eq:h1-tilde-bound} on $\tilde h_1$,
\[
    \bigl|\tilde v'-C_*(v_+-\tilde v)(\tilde v-v_-)\bigr|
    =\varepsilon|\tilde h_1|\,(v_+-\tilde v)(\tilde v-v_-)
    \le C\varepsilon\,(v_+-\tilde v)(\tilde v-v_-),
\]
which is \eqref{eq:logistic-precise}. Let $\varepsilon_7>0$ satisfy $C_7\varepsilon_7\le\tfrac12\inf_\Omega C_*$, so that $C_*+\varepsilon\tilde h_1\ge C_*/2>0$ for every $\varepsilon\in(0,\varepsilon_0)$. Since $v_-<\tilde v<v_+$, the factorization \eqref{eq:v_differential_eq} implies
\begin{equation*}
    C^{-1}(v_+ - \tilde{v})(\tilde{v} - v_-) \le \tilde{v}'(\xi) \le C(v_+ - \tilde{v})(\tilde{v} - v_-),
\end{equation*}
which is \eqref{eq:logistic-signed}.

The normalization $\tilde v(0)=\frac{v_-+v_+}{2}$ gives $v_+-\tilde v\ge\frac{\varepsilon}{2}$ for $\xi\le0$ and $\tilde v-v_-\ge\frac{\varepsilon}{2}$ for $\xi\ge0$. Integrating the two-sided bound \eqref{eq:logistic-signed} on each half-line then yields
\begin{equation} \label{eq:vdecay_final}
    |\tilde{v}(\xi)-v_-| \le C\varepsilon e^{-c\varepsilon|\xi|} \quad (\xi \le 0), \qquad |\tilde{v}(\xi)-v_+| \le C\varepsilon e^{-c\varepsilon|\xi|} \quad (\xi \ge 0).
\end{equation}
Hence, by the factorization \eqref{eq:v_differential_eq}, the derivative satisfies
\begin{equation}\label{eq:vprime-decay}
    |\tilde{v}'(\xi)| \le C\varepsilon^2 e^{-c\varepsilon|\xi|}.
\end{equation}

Finally, $|\tilde u-u_\pm|=\sigma|\tilde v-v_\pm|$ by the mass relation \eqref{eq:mass-relation} and the Rankine--Hugoniot condition. The formula \eqref{eq:theta_expression} for $\tilde\theta$, the bounds \eqref{eq:construction-v-bounds}, and the factorization \eqref{eq:v_differential_eq} yield
\[
    |\tilde\theta-\theta_\pm|
    \le C\bigl(|\tilde v-v_\pm|+|\tilde v'|+|\tilde v''|+|\tilde v'|^2\bigr)
    \le C\bigl(|\tilde v-v_\pm|+|\tilde v'|\bigr)
    \le C|\tilde v-v_\pm| .
\]
Hence, by the decay \eqref{eq:vdecay_final} of $\tilde v$,
\begin{equation*}
    |(\tilde{v} - v_\pm, \tilde{u} - u_\pm, \tilde{\theta} - \theta_\pm)| \le C \varepsilon e^{-c\varepsilon |\xi|},
    \qquad \pm\xi\ge0,
\end{equation*}
and, by the comparability \eqref{eq:derivative-comparability} and the decay \eqref{eq:vprime-decay} of $\tilde v'$,
\begin{equation*}
    |(\tilde{v}_\xi, \tilde{u}_\xi, \tilde{\theta}_\xi)| \le C \varepsilon^2 e^{-c\varepsilon |\xi|}.
\end{equation*}
This proves the exponential decay in \eqref{eq:shock_estimates} for every $\varepsilon\in(0,\varepsilon_0)$, and the proof of Theorem~\ref{thm:main} is complete.

\subsubsection{Proof of Corollary~\ref{cor:second-order}}\label{subsubsec:corollary}

Integrating the momentum equation $\eqref{eq:shock-ode}_2$ from $\pm\infty$ to $\xi$ and using the mass relation \eqref{eq:mass-relation} and the Rankine--Hugoniot condition, we obtain
\begin{equation}\label{eq:pressure-two-sided}
    \tilde p-p_\pm=-\sigma^2(\tilde v-v_\pm)-\frac{\mu\sigma\tilde v'}{\tilde v}
    +\kappa\left(-\frac{\tilde v''}{\tilde v^5}+\frac{5(\tilde v')^2}{2\tilde v^6}\right).
\end{equation}
The two identities in \eqref{eq:pressure-two-sided} yield
\begin{equation}\label{eq:secant-difference}
    \frac{\tilde p-p_+}{\tilde v-v_+}-\frac{\tilde p-p_-}{\tilde v-v_-}
    =\left[\frac{\mu\sigma\tilde v'}{\tilde v}
        +\kappa\left(\frac{\tilde v''}{\tilde v^5}-\frac{5(\tilde v')^2}{2\tilde v^6}\right)\right]
    \frac{\varepsilon}{(v_+-\tilde v)(\tilde v-v_-)}.
\end{equation}
In \eqref{eq:secant-difference}, the factorization \eqref{eq:v_differential_eq} and the bounds \eqref{eq:construction-v-bounds} give
\[
    \frac{\varepsilon\,\tilde v'}{(v_+-\tilde v)(\tilde v-v_-)}=\varepsilon\bigl[C_*+\varepsilon\tilde h_1\bigr],
    \qquad
    |\tilde v''|+(\tilde v')^2\le C\varepsilon\,\tilde v'.
\]
With the bound \eqref{eq:sigma-small-amplitude} on $\sigma$, $|\tilde v-v_-|\le\varepsilon$, and the bound \eqref{eq:h1-tilde-bound} on $\tilde h_1$, we obtain
\[
    \frac{\tilde p-p_+}{\tilde v-v_+}-\frac{\tilde p-p_-}{\tilde v-v_-}
    =\Bigl[\frac{\mu\sigma}{\tilde v}+O(\varepsilon)\Bigr]\varepsilon\bigl[C_*+\varepsilon\tilde h_1\bigr]
    =\frac{\mu\sigma_-C_*}{v_-}\varepsilon+O(\varepsilon^2).
\]

Finally, the definition \eqref{eq:C1-def} of $C_*$, $\sigma_-^2=\gamma p_-/v_-$, and the definition of $A_*$ give
\[
    \frac{\mu\sigma_-C_*}{v_-}
    =\frac{\mu R\gamma(\gamma+1)\sigma_-^2}{2v_-\bigl(\mu R\gamma+\alpha(\gamma-1)^2\bigr)}
    =\sigma_-A_*\frac{\mu R\gamma}{\mu R\gamma+\alpha(\gamma-1)^2},
\]
which proves \eqref{eq:secant-estimate}.

\section*{Funding}
This work was partially supported by the National Research Foundation of Korea [grant numbers RS-2024-00361663, NRF-2019R1A5A1028324] and by the Miwon Du-Myeong Fellowship.

\section*{Acknowledgments}
The author thanks Professor Moon-Jin Kang and Professor Jeongho Kim for their valuable comments.

\section*{Declaration of competing interest}
The author declares that he has no known competing financial interests or personal relationships that could have appeared to influence the work reported in this paper.

\bibliography{reference}

\end{document}